\def\bfB{\mathbf{B}}
\newcommand{\Mat}{\operatorname{M}}
\newcommand{\Mata}{\operatorname{A}}
\newcommand{\GL}{\operatorname{GL}}
\newcommand{\Ker}{\operatorname{Ker}}
\newcommand{\Vect}{\operatorname{span}}
\newcommand{\im}{\operatorname{Im}}
\newcommand{\ad}{\operatorname{ad}}
\newcommand{\tr}{\operatorname{tr}}
\newcommand{\rk}{\operatorname{rk}}
\newcommand{\codim}{\operatorname{codim}}
\renewcommand{\setminus}{\smallsetminus}
\def\K{\mathbb{K}}
\def\calB{\mathcal{B}}
\def\calC{\mathcal{C}}
\def\calD{\mathcal{D}}
\def\calG{\mathcal{G}}
\def\calH{\mathcal{H}}
\def\calP{\mathcal{P}}
\def\calV{\mathcal{V}}
\def\calX{\mathcal{X}}
\def\lcro{\mathopen{[\![}}
\def\rcro{\mathclose{]\!]}}
\theoremstyle{definition}
\newtheorem{Def}{Definition}
\theoremstyle{plain}
\newtheorem{theo}{Theorem}
\newtheorem{prop}[theo]{Proposition}
\newtheorem{cor}[theo]{Corollary}
\newtheorem{lemma}[theo]{Lemma}
\theoremstyle{plain}
\theoremstyle{remark}
\newtheorem{Rems}{Remarks}
\newtheorem{Rem}[Rems]{Remark}
\title{Commutators from a hyperplane of matrices}
\author{Cl\'ement de Seguins Pazzis\footnote{
Universit\'e de Versailles Saint-Quentin-en-Yvelines, Laboratoire de Math\'ematiques
de Versailles, 45 avenue des Etats-Unis, 78035 Versailles cedex, France}
\footnote{e-mail address: dsp.prof@gmail.com}}
\begin{document}

\thispagestyle{plain}

\maketitle

\begin{abstract}
Denote by $\Mat_n(\K)$ the algebra of $n$ by $n$ matrices with entries in the field $\K$.
A theorem of Albert and Muckenhoupt states that every trace zero
matrix of $\Mat_n(\K)$ can be expressed as $AB-BA$ for some pair $(A,B)\in \Mat_n(\K)^2$.
Assuming that $n>2$ and that $\K$ has more than $3$ elements,
we prove that the matrices $A$ and $B$ can be required to belong to an arbitrary given hyperplane of $\Mat_n(\K)$.
\end{abstract}

\vskip 2mm
\noindent
\emph{AMS Classification:} 15A24, 15A30

\vskip 2mm
\noindent
\emph{Keywords:} commutator; trace; hyperplane; matrices

\vskip 4mm

\section{Introduction}

\subsection{The problem}

In this article, we let $\K$ be an arbitrary field.
We denote by $\Mat_n(\K)$ the algebra of square matrices with $n$ rows and entries in $\K$,
and by $\frak{sl}_n(\K)$ its hyperplane of trace zero matrices. The trace of a matrix $M \in \Mat_n(\K)$
is denoted by $\tr M$. Given two matrices $A$ and $B$ of $\Mat_n(\K)$, one sets
$$[A,B]:=AB-BA,$$
known as the commutator, or Lie bracket, of $A$ and $B$. Obviously, $[A,B]$ belongs to $\frak{sl}_n(\K)$.
Although it is easy to see that the linear subspace spanned by the commutators is $\frak{sl}_n(\K)$,
it is more difficult to prove that every trace zero matrix is actually a commutator,
a theorem which was first proved by Shoda \cite{Shoda} for fields of characteristic $0$,
and later generalized to all fields by Albert and Muckenhoupt \cite{AlbertMuck}. Recently, exciting new developments
on this topic have appeared: most notably, the long-standing conjecture that the result holds for all principal ideal domains
has just been solved by Stasinski \cite{Stasinski} (the case of integers had been worked out earlier by Laffey and Reams \cite{LaffeyReams}).

Here, we shall consider the following variation of the above problem:
\begin{center}Given a (linear) hyperplane $\calH$ of $\Mat_n(\K)$,
is it true that every trace zero matrix is the commutator of two matrices of $\calH$?
\end{center}

Our first motivation is that this constitutes a natural generalization of the following result of Thompson:

\begin{theo}[Thompson, Theorem 5 of \cite{Thompson}]\label{hypercan}
Assume that $n \geq 3$. Then,
$[\frak{sl}_n(\K),\frak{sl}_n(\K)]=\frak{sl}_n(\K)$.
\end{theo}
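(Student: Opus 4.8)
The plan is to reduce the statement to the Albert--Muckenhoupt theorem \cite{AlbertMuck} whenever the characteristic of $\K$ does not divide $n$, and to treat the remaining case by an explicit construction. By Albert--Muckenhoupt, write $M=[A_0,B_0]$ with $A_0,B_0\in\Mat_n(\K)$. For any $Z$ commuting with $B_0$ one has $[A_0+Z,B_0]=M$, and then for any $W$ commuting with $A_0+Z$ one has $[A_0+Z,B_0+W]=M$; so it is enough to find such $Z,W$ with $\tr(A_0+Z)=\tr(B_0+W)=0$. When $\operatorname{char}\K\nmid n$, the scalar matrices lie in every centraliser and realise every trace, so $Z=-n^{-1}(\tr A_0)\,I_n$ and then $W=-n^{-1}(\tr B_0)\,I_n$ settle the matter. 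This step uses neither $n\geq 3$ nor any bound on $\card\K$.

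So assume $\operatorname{char}\K=p$ divides $n$, so that $\K I_n\subseteq\frak{sl}_n(\K)$ and the previous step collapses; here the hypothesis $n\geq 3$ becomes essential, since $[\frak{sl}_2(\F_2),\frak{sl}_2(\F_2)]=\K I_2\subsetneq\frak{sl}_2(\F_2)$. As conjugation preserves the trace and carries commutators to commutators, we may replace $M$ by any similar matrix; it is classical (Fillmore) that every trace-zero matrix which is not a nonzero scalar matrix is similar to a matrix with zero diagonal, and the nonzero scalar case is disposed of directly, because on $\bigl(\K[x]/(x^p)\bigr)^{\oplus n/p}$ the operators ``multiplication by $x$'' and ``$c\,d/dx$'' both have trace $0$ and commutator $cI_n$. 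Thus we may assume $M=(m_{ij})$ has zero diagonal. If $\card\K\geq n$, choose pairwise distinct $a_1,\dots,a_n\in\K$ with $\sum_i a_i=0$ (a short counting argument shows this is possible once $\card\K\geq n$; e.g.\ take all of $\K$ when $\card\K=n\geq3$), set $A:=\Diag(a_1,\dots,a_n)\in\frak{sl}_n(\K)$, and put $b_{ij}:=m_{ij}/(a_i-a_j)$ for $i\neq j$ and $b_{ii}:=0$; then $B=(b_{ij})\in\frak{sl}_n(\K)$ and $[A,B]=M$.

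The genuinely hard case, and the one I expect to be the main obstacle, is that of a finite field $\K$ with $\card\K<n$ and $p\mid n$ (so that $n\geq 2p\geq 4$): then no trace-zero diagonal matrix over $\K$ has pairwise distinct entries, so a single fixed $A$ cannot have $\ad_A$ surject onto the space of zero-diagonal matrices, and the construction above breaks down. Here I would argue by induction on $n$: split $\{1,\dots,n\}$ into intervals of sizes $n_1,\dots,n_r<n$, write $M$ in the associated block form, and try to build $A=\Diag(A^{(1)},\dots,A^{(r)})\in\frak{sl}_n(\K)$ together with $B=\bigl(B^{(ij)}\bigr)$ so that $[A^{(k)},B^{(kk)}]=M^{(kk)}$ for each $k$ (furnished by the inductive hypothesis, since each $M^{(kk)}$ has zero diagonal, hence trace $0$) and $A^{(i)}B^{(ij)}-B^{(ij)}A^{(j)}=M^{(ij)}$ for $i\neq j$ (solvable, for every right-hand side, once the $A^{(k)}$ have pairwise disjoint spectra, by Sylvester's theorem). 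The crux is to produce the blocks $A^{(k)}$ with pairwise disjoint spectra \emph{and} realise the $M^{(kk)}$ as commutators of trace-zero blocks simultaneously; over a small field this amounts to a spectrum-constrained strengthening of the theorem for the blocks, and it forces one to dispatch a short list of base cases by hand --- most conspicuously $\frak{sl}_4(\F_2)$, which admits no partition into intervals of size $\geq3$. Ensuring that the forced off-diagonal blocks of $B$ (which contribute nothing to the trace) can be completed to a trace-zero $B$, and that the partitioning makes the induction terminate, completes the plan.
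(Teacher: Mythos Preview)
The paper does not prove this theorem at all: it is quoted as Thompson's result (Theorem~5 of \cite{Thompson}) and used as a black box, notably in the proof of Lemma~\ref{LLDcase}. So there is no ``paper's own proof'' to compare against; your write-up is an independent attempt at Thompson's theorem.

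On the substance of your attempt: the reduction when $\operatorname{char}\K\nmid n$ is clean and correct, and the Weyl-algebra trick for nonzero scalar matrices together with Fillmore's reduction to zero diagonal is fine. The diagonal argument for $\#\K\geq n$ is essentially right, though the existence of $n$ distinct elements of $\K$ summing to $0$ deserves a line of justification beyond ``a short counting argument'' (it is true for $\#\K\geq n\geq 3$, but not entirely trivial when $\#\K=n+1$ or so).

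The real gap is your final paragraph, and you flag it yourself: for finite $\K$ with $\#\K<n$ and $p\mid n$ you only sketch a block-inductive scheme. As written it is not a proof. The difficulty you identify is genuine --- you need, inductively, not just that each diagonal block $M^{(kk)}$ be a commutator in $\frak{sl}_{n_k}(\K)$, but that the $A^{(k)}$ can be chosen with pairwise disjoint spectra so that the off-diagonal Sylvester equations are solvable, \emph{and} that $\sum_k \tr B^{(kk)}=0$ can be arranged. This is a strictly stronger inductive hypothesis than the theorem itself, so the induction does not close as stated; and the base case $\frak{sl}_4(\F_2)$ (and more generally $\frak{sl}_{2p}(\F_p)$) sits outside your scheme entirely. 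Thompson's original proof handles precisely these small-field obstructions, and they take real work; you would need either to carry out the strengthened induction with explicit spectral control, or to consult \cite{Thompson} for the finite-field machinery.
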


Another motivation stems from the following known theorem:

\begin{theo}[Proposition 4 of \cite{dSPlargedimprod}]
Let $\calV$ be a linear subspace of $\Mat_n(\K)$ with $\codim \calV<n-1$.
Then, $\frak{sl}_n(\K)=\Vect \bigl\{[A,B] \mid (A,B) \in \calV^2\bigr\}$.
\end{theo}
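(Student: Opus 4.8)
The plan is to dualise the statement with respect to the symmetric bilinear form $\langle X,Y\rangle:=\tr(XY)$ on $\Mat_n(\K)$, which is non-degenerate over every field (the matrix units satisfy $\langle E_{ij},E_{kl}\rangle=\delta_{jk}\delta_{il}$, so the Gram matrix is a permutation matrix). Write $W:=\Vect\bigl\{[A,B]\mid (A,B)\in\calV^2\bigr\}$. Since every commutator is traceless, $W\subseteq\frak{sl}_n(\K)$, and $\frak{sl}_n(\K)^\perp=\Vect(I_n)$ because $\frak{sl}_n(\K)$ has codimension $1$ and $\tr(I_nM)=\tr M$. As the ambient form is non-degenerate, it therefore suffices to show $W^\perp\subseteq\Vect(I_n)$, that is: every matrix orthogonal to all commutators $[A,B]$ with $A,B\in\calV$ is scalar.

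First I would rewrite, for $Y\in\Mat_n(\K)$ and $A,B\in\calV$, using cyclicity of the trace,
$$\langle Y,[A,B]\rangle=\tr(YAB)-\tr(YBA)=\tr\bigl([Y,A]B\bigr)=\langle\ad_Y(A),B\rangle,$$
so that $Y\in W^\perp$ if and only if $\ad_Y(A)\in\calV^\perp$ for every $A\in\calV$, i.e. $\ad_Y(\calV)\subseteq\calV^\perp$. Now fix such a $Y$, choose a complement $\calU$ of $\calV$ in $\Mat_n(\K)$ (so $\dim\calU=\codim\calV$), and estimate the rank of $\ad_Y$:
$$\rk(\ad_Y)=\dim\ad_Y(\calV+\calU)\le\dim\ad_Y(\calV)+\dim\ad_Y(\calU)\le\dim\calV^\perp+\dim\calU=2\,\codim\calV.$$
By hypothesis $\codim\calV<n-1$, hence $\codim\calV\le n-2$ and $\rk(\ad_Y)\le 2(n-2)<2(n-1)$.

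Finally I would invoke the classical bound that $\rk(\ad_Y)\ge 2(n-1)$ for every non-scalar matrix $Y\in\Mat_n(\K)$ — equivalently, that the centraliser of a non-scalar matrix has codimension at least $2(n-1)$, the extremal case being $Y$ similar to $\Diag(\lambda,\mu,\dots,\mu)$ with $\lambda\ne\mu$; this follows from the formula for $\dim\calC(Y)$ in terms of the invariant factors of $\K^n$ regarded as a $\K[Y]$-module, and holds in every characteristic. Comparing with the previous paragraph forces $Y$ to be scalar. Thus $W^\perp\subseteq\Vect(I_n)$; taking orthogonal complements and recalling $W\subseteq\frak{sl}_n(\K)$ gives $W=\frak{sl}_n(\K)$, as desired. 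I expect the only genuinely inventive step to be the dual reformulation ``$\ad_Y$ maps $\calV$ into $\calV^\perp$''; granting that, the proof is just the short dimension count above, and one sees that the hypothesis $\codim\calV<n-1$ is exactly what makes $2\,\codim\calV$ fall below the sharp lower bound $2(n-1)$ for the rank of the adjoint of a non-scalar matrix. The argument is independent of Thompson's Theorem~\ref{hypercan} and in fact works unchanged for $n=2$, where the hypothesis forces $\calV=\Mat_2(\K)$.
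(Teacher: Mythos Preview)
The paper does not itself prove this statement: it is quoted from \cite{dSPlargedimprod} as motivation, with no argument supplied here, so there is no in-paper proof to compare against.

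Your proof is correct. The identity $\tr(Y[A,B])=\tr([Y,A]B)$ cleanly converts $Y\in W^\perp$ into the condition $\ad_Y(\calV)\subseteq\calV^\perp$, and the rank estimate $\rk(\ad_Y)\le\dim\calV^\perp+\dim(\Mat_n(\K)/\calV)=2\codim\calV\le 2(n-2)$ follows immediately. The lower bound $\rk(\ad_Y)\ge 2(n-1)$ for non-scalar $Y$ that you invoke is indeed classical and characteristic-free: extending scalars to $\bar\K$ does not change $\dim\calC(Y)$, and over $\bar\K$ the centralizer of a matrix whose Jordan-block sizes (for a fixed eigenvalue) are $k_1,k_2,\dots$ has dimension $\sum_{i,j}\min(k_i,k_j)$, summed over eigenvalues; for a non-scalar matrix this is at most $(n-1)^2+1$, with equality precisely for your example $\Diag(\lambda,\mu,\dots,\mu)$ (or its nilpotent cousin $\lambda I_n+E_{1,2}$). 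Your closing remarks are also accurate: the argument is independent of Theorem~\ref{hypercan}, and for $n=2$ the hypothesis $\codim\calV<1$ simply forces $\calV=\Mat_2(\K)$.
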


Thus, a natural question to ask is whether, in the above situation, every trace zero matrix is a commutator of two matrices of $\calV$.
Studying the case of hyperplanes is an obvious first step in that direction (and a rather non-trivial one, as we shall see).

An additional motivation is the corresponding result for products (instead of commutators) that we have obtained
in \cite{dSPlargedimprod}:

\begin{theo}[Theorem 3 of \cite{dSPlargedimprod}]
Let $\calH$ be a (linear) hyperplane of $\Mat_n(\K)$, with $n>2$. Then, every matrix of $\Mat_n(\K)$
splits up as $AB$ for some $(A,B) \in \calH^2$.
\end{theo}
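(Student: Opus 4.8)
\bigskip
\noindent\textbf{Proof proposal.}

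The plan is to convert the two-sided factorization $M=AB$ into a one-sided question about a single matrix, solve that question by small rank-one perturbations for a ``generic'' hyperplane, and then deal separately with a short list of degenerate hyperplanes where the hypothesis $n>2$ is genuinely needed. First, since the trace form $(X,Y)\mapsto\tr(XY)$ on $\Mat_n(\K)$ is non-degenerate, I would write $\calH=\{X\in\Mat_n(\K):\ \tr(PX)=0\}$ for a fixed nonzero matrix $P$. The assertion is unchanged when one replaces the pair $(\calH,M)$ by $(Q\calH Q^{-1},QMQ^{-1})$, i.e.\ when one replaces $(P,M)$ by $(QPQ^{-1},QMQ^{-1})$; hence I may conjugate $P$ into a convenient form: either $P$ is a nonzero scalar matrix (the mildest case, where $\calH=\frak{sl}_n(\K)$), or $P$ is non-scalar, in which case some vector $e$ satisfies $Pe\notin\K e$ and I may assume that $Pe_1=e_2$ in the standard basis. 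I would also record that, for $n\geq 2$, the hyperplane $\calH$ contains an invertible matrix: by a classical theorem, a linear subspace of $\Mat_n(\K)$ all of whose members are singular has dimension at most $n^2-n$, which is strictly smaller than $\dim\calH=n^2-1$.

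The reduction is: \emph{it suffices to find an invertible $A\in\calH$ such that $A^{-1}M\in\calH$}, for then $M=A\cdot(A^{-1}M)$ is the desired factorization; and since $A^{-1}M$ need not be invertible, this also spares us any case split on the rank of $M$. Equivalently, I want an invertible $A$ with $\tr(PA)=0=\tr(PA^{-1}M)$. Starting from any invertible $A_0\in\calH$ and setting $C_0:=A_0^{-1}M$ and $c_0:=\tr(PC_0)$, I am done at once if $c_0=0$; otherwise I would look for a rank-one correction $A:=A_0(I+uv^T)$ with $1+v^Tu\neq 0$, so that $A$ stays invertible. By the Sherman--Morrison formula, the requirements $A\in\calH$ and $A^{-1}M\in\calH$ turn into the two bilinear equations $v^T(PA_0)u=0$ and $v^T(C_0P-c_0I)u=c_0$. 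Writing $w_1:=(PA_0)u$ and $w_2:=(C_0P-c_0I)u$, this system is solvable in $v$ with $v^Tu\neq -1$ as soon as some $u$ makes $w_1$ and $w_2$ linearly independent with $u\notin\Vect(w_1,w_2)$, and a mild variant covers the case $u\in\Vect(w_1,w_2)$. When no single $u$ works --- which forces a rigid relation between $C_0P$, $PA_0$ and $c_0$, tying $A_0$ to the fixed matrix $M$ --- I would either change $A_0$ (most choices escape that relation) or use a two-step correction $A=A_0(I+u_1v_1^T)(I+u_2v_2^T)$, whose first factor shifts $c_0$ and breaks the rigidity.

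The main obstacle is the short list of hyperplanes for which even these corrections stall: essentially the conjugates of the ``quasi-triangular'' hyperplane $\{X:\ X_{21}=0\}$, whose $n=2$ version is the algebra of upper-triangular matrices, for which the statement is visibly false since a product of upper-triangular matrices is upper-triangular. This is precisely where $n>2$ must be used: when $n\geq 3$ one can route rank-one data through a third coordinate, for instance $E_{21}=E_{23}E_{31}$, so that both factors stay off the forbidden entry; more generally, writing $M$ in a reduced form adapted to the normalized $P$ (indeed a direct analysis of the second row of $A^{-1}$, as $A$ runs over the invertible elements of $\calH$, already shows that the obstruction vanishes for $n\geq 3$), one exhibits $A$ and $B$ explicitly. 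A further, more laborious difficulty is that the theorem is asserted over \emph{every} field, so for small fields such as $\F_2$ no dimension count or generic-position shortcut is available and the correction steps --- and the residual cases --- must be verified by hand; this bookkeeping, rather than any single decisive idea, is where most of the effort lies.
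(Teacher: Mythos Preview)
This theorem is not proved in the present paper: it is merely quoted as Theorem 3 of \cite{dSPlargedimprod} to motivate the main problem, with no argument given here. There is therefore no ``paper's own proof'' to compare your proposal against.

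On its own merits, your outline is a plausible strategy---reducing to the one-sided condition ``find invertible $A\in\calH$ with $A^{-1}M\in\calH$'' and then correcting by rank-one updates via Sherman--Morrison is natural and your two bilinear equations are computed correctly---but what you have written is a plan rather than a proof. The genuine gaps are precisely the parts you yourself flag: you do not actually carry out the case where $(PA_0)u$ and $(C_0P-c_0I)u$ are linearly dependent for every $u$ (you only assert that ``changing $A_0$'' or a two-step correction would work, without exhibiting either), you do not complete the analysis of the degenerate hyperplanes beyond the illustrative identity $E_{21}=E_{23}E_{31}$, and you explicitly concede that the small-field cases are unverified bookkeeping. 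Over $\F_2$ and $\F_3$ one cannot appeal to genericity, and your Sherman--Morrison system $v^Tw_1=0$, $v^Tw_2=c_0$, $v^Tu\neq -1$ may have no solution even when $w_1,w_2$ are independent, simply because there are too few vectors. So as it stands this is a promising sketch, not a proof; to complete it you would need to execute the residual case analysis in full, which---as you say---is where the work lies.
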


\subsection{Main result}

In the present paper, we shall prove the following theorem:

\begin{theo}\label{dSPcrochet}
Assume that $\# \K>3$ and $n>2$. Let $\calH$ be an arbitrary hyperplane of $\Mat_n(\K)$.
Then, every trace zero matrix of $\Mat_n(\K)$ splits up as $AB-BA$ for some $(A,B)\in \calH^2$.
\end{theo}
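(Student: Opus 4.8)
The plan is to dispose of the degenerate configurations and then run a refinement of the classical ``similar to a zero-diagonal matrix'' argument in which one extra linear condition is imposed on each factor.

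First, the reductions. If $T=0$, take $A=B=0$. The statement is invariant under conjugating the pair $(\calH,T)$ and under rescaling $T$, so write $\calH=\{M\in\Mat_n(\K):\tr(PM)=0\}$ with $P\neq0$, conjugation by $Q$ replacing $P$ by $Q^{-1}PQ$. If $P$ is scalar then $\calH=\frak{sl}_n(\K)$ and the conclusion is exactly Theorem~\ref{hypercan}; so assume $P$ non-scalar. If $T$ is a nonzero scalar matrix — which forces $\mathrm{char}\,\K$ to divide $n$ — one argues directly: $I_n=[A_0,B_0]$ for the standard pair built from multiplication by $t$ and formal differentiation on $\K[t]/(t^p)$ (tensored up to size $n$), and since both membership conditions are homogeneous it suffices to find a conjugate of $P$ orthogonal, for the trace form, to $A_0$ and $B_0$, i.e.\ to meet a conjugacy class with a codimension $\leq2$ subspace. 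From now on $T$ is non-scalar with $\tr T=0$. Note that the set of attainable commutators is $\bigcup_{A\in\calH}\ad_A(\calH)$, a union of subspaces of $\frak{sl}_n(\K)$ whose sum is $\frak{sl}_n(\K)$ (by Proposition~4 of \cite{dSPlargedimprod}, since $\codim\calH=1<n-1$); the task is to promote this ``sum'' to a ``union''.

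Here is the core construction. Being non-scalar with zero trace, $T$ is similar to a matrix with zero diagonal. Suppose we can choose a basis $\calB$ in which $T$ has zero diagonal while the conjugate of $P$ has nonzero diagonal $(P_{11},\dots,P_{nn})$, and suppose there are pairwise distinct scalars $a_1,\dots,a_n$ with $\sum_i P_{ii}a_i=0$. Let $A:=\Diag(a_1,\dots,a_n)$ in the basis $\calB$. Then $A\in\calH$, $A$ has simple spectrum, $T\in\im\ad_A$ (because $T$ is zero-diagonal in $\calB$) and $\Ker\ad_A\not\subseteq\calH$ (because $(P_{ii})_i\neq0$), so $\ad_A(\calH)=\im\ad_A\ni T$; concretely, the solutions of $[A,B]=T$ are exactly the matrices with $B_{ij}=T_{ij}/(a_i-a_j)$ for $i\neq j$ and the $B_{ii}$ free, and one may choose the $B_{ii}$ so that $\tr(PB)=\sum_i P_{ii}B_{ii}+\sum_{i\neq j}P_{ji}T_{ij}/(a_i-a_j)$ vanishes, giving $B\in\calH$. (If $P$ is a scalar multiple of $T$ its conjugate is necessarily zero-diagonal in $\calB$, but then the second sum above vanishes on pairing $(i,j)$ with $(j,i)$, so $B_{ii}=0$ works and the argument survives.) Thus everything reduces to producing $\calB$ and the scalars $a_i$.

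Existence of such a $\calB$ is a matter of genericity: the set of bases conjugating $T$ to a zero-diagonal matrix is large (of positive dimension over $\overline{\K}$), while the conjugates of $P$ with a ``forbidden'' diagonal — zero, or proportional to some $e_i-e_j$ — form a thin subset, so for $\#\K$ large one avoids it, the few residual shapes of $(P,T)$ being settled by hand. Given $\calB$, the scalars $a_i$ exist because the solution set of $\sum_i P_{ii}a_i=0$ is a hyperplane of $\K^n$ distinct from every $\{a_i=a_j\}$, hence meets the set of tuples with distinct coordinates — provided $\#\K$ is not too small relative to $n$. That proviso is the real difficulty: when $\K$ is finite with $\#\K<n$ there are not even $n$ distinct scalars, and the diagonal construction collapses. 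The remedy is either to replace $A$ by a nonderogatory matrix of companion type (repeated eigenvalues, or eigenvalues in an extension, are then allowed, and $\im\ad_A$ still has dimension $n^2-n$), or — more robustly — to induct on $n$ using the classical reduction $T\sim\left(\begin{smallmatrix}0&R\\ e_1&T'\end{smallmatrix}\right)$ with $\tr T'=0$: taking $A$ block-diagonal with corner $a$ and block $A'$, and $B$ suitably completed, $[A,B]=T$ becomes $[A',B']=T'$ together with two linear systems governed by $aI-A'$, solvable once $a\notin\Sp A'$, while $\calH$ induces on the lower block either $\frak{sl}_{n-1}(\K)$ (Theorem~\ref{hypercan} again) or a hyperplane (induction hypothesis), the corner scalars $a,b$ being pinned down or left free by the two conditions $\tr(PA)=\tr(PB)=0$. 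Fitting all of this together over a small field — choosing $a$ outside $\Sp A'$, off the already-used coordinates, and on the correct side of the linear form defining $\calH$, all at once, and disposing of the base cases $n=3$ (and perhaps $n=4$) explicitly — is where the hypothesis $\#\K>3$ is genuinely used and where the bulk of the technical work lies; I expect this finite-field reconciliation to be the main obstacle.
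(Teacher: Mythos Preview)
This is a plan, not a proof, and you say so yourself: ``I expect this finite-field reconciliation to be the main obstacle.'' That obstacle is exactly the content of the theorem. Your core construction is Shoda's: pick a basis in which $T$ has zero diagonal, take $A$ diagonal with distinct entries, and use the free diagonal of $B$ to land in $\calH$. This needs $n$ distinct scalars, so it dies as soon as $\#\K<n$, which the hypothesis $\#\K>3$ does not prevent. Your two proposed repairs are both left at the level of intentions. For the companion/nonderogatory replacement, $\im\ad_A$ is no longer ``zero-diagonal matrices'' but $\calC(A)^\bot$, and you would have to show simultaneously that a conjugate of $T$ lies in it, that $A\in\calH$, and that $\calC(A)\not\subset\calH$; none of this is argued. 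For the induction, the restriction of $\calH$ to the lower-right $(n-1)\times(n-1)$ block need not be a hyperplane (it can be all of $\Mat_{n-1}$), the choice of the corner scalar $a$ outside $\Sp A'$ competes with the condition $\tr(PA)=0$, and the base cases are not done. The ``genericity'' step is also not a proof over finite fields: that the bad bases form a proper subvariety does not by itself produce a good $\K$-rational basis.

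The paper avoids all of this by taking the Albert--Muckenhoupt route rather than Shoda's. Instead of diagonal $A$ with distinct eigenvalues, it uses strictly upper-triangular nilpotent matrices $M$ of rank $n-1$ (always cyclic, regardless of $\#\K$). The freedom in the super-diagonal entries of $M$ is then exploited polynomially: if $A\notin[\calH,\calH]$ one gets $\calC(M)\subset\calH$ for every such $M$, hence polynomial identities in the entries $x_1,\dots,x_{n-1}$ which, for $\#\K>3$, force structural constraints on $B$ (Lemma~\ref{prelimlemma}). Iterating this over changes of basis shows that either $A\in[\calH,\calH]$, or $(I_n,A,B)$ is locally linearly dependent, or $n=3$ and $A\sim\lambda I_3+E_{2,3}$; the LLD case is then dispatched via the classification of LLD triples (reducing to $\calH=\frak{sl}_n(\K)$ and Thompson's theorem), and the exceptional $3\times3$ case is handled by explicit cyclic matrices. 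The point is that the nilpotent-$M$ mechanism never asks for more than $3$ or $4$ nonzero field elements, which is why the bound $\#\K>3$ suffices uniformly in $n$---something your diagonal construction cannot achieve.
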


Let us immediately discard an easy case. Assume that $\calH$ does not contain the identity matrix $I_n$.
Then, given $(A,B)\in \Mat_n(\K)^2$, we have
$$[\lambda I_n+A,\mu I_n+B]=[A,B]$$
for all $(\lambda,\mu)\in \K^2$, and obviously there is a unique pair $(\lambda,\mu)\in \K^2$ such that
$\lambda I_n+A$ and $\mu I_n+B$ belong to $\calH$. In that case, it follows from the Albert-Muckenhoupt theorem
that every matrix of $\frak{sl}_n(\K)$ is a commutator of matrices of $\calH$.
Thus, the only case left to consider is the one when $I_n \in \calH$. As we shall see, this is a highly non-trivial problem.
Our proof will broadly consist in refining Albert and Muckenhoupt's method.

\paragraph{}
The case $n=2$ can be easily described over any field:

\begin{prop}
Let $\calH$ be a hyperplane of $\Mat_2(\K)$.
\begin{enumerate}[(a)]
\item If $\calH$ contains $I_2$, then $[\calH,\calH]$ is a $1$-dimensional linear subspace of $\Mat_2(\K)$.
\item If $\calH$ does not contain $I_2$, then $[\calH,\calH]=\frak{sl}_2(\K)$.
\end{enumerate}
\end{prop}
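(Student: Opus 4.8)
The plan is to treat the two cases of the statement separately, the case $I_2 \notin \calH$ being immediate. If $I_2 \notin \calH$, we are exactly in the ``easy case'' discussed above for general $n$: since $\Mat_2(\K) = \K I_2 \oplus \calH$, given $M \in \frak{sl}_2(\K)$ the Albert--Muckenhoupt theorem provides $(A,B) \in \Mat_2(\K)^2$ with $[A,B] = M$, and writing $A = \lambda I_2 + A'$ and $B = \mu I_2 + B'$ with $A', B' \in \calH$ gives $[A',B'] = [A,B] = M$; hence $\frak{sl}_2(\K) \subseteq [\calH,\calH]$, and the reverse inclusion is clear, which proves (b).

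Assume now $I_2 \in \calH$. The key is to exploit the presence of the identity: I would first extend $I_2$ to a basis $(I_2,P,Q)$ of the $3$-dimensional space $\calH$. For arbitrary $A = xI_2 + yP + zQ$ and $B = x'I_2 + y'P + z'Q$ in $\calH$, expanding $[A,B]$ by bilinearity and using $[I_2,\cdot] = 0$, $[P,P] = [Q,Q] = 0$ and $[Q,P] = -[P,Q]$ yields
$$[A,B] = (yz' - y'z)\,[P,Q].$$
Since the scalar $yz' - y'z$ ranges over all of $\K$ (take $y = 1$, $z = y' = 0$ and $z'$ arbitrary), this already shows $[\calH,\calH] = \K\,[P,Q]$, a linear subspace of $\Mat_2(\K)$ of dimension $0$ or $1$.

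It then remains to prove $[P,Q] \neq 0$, i.e.\ that $\calH$ is not a commutative subspace of $\Mat_2(\K)$. For this I would use the elementary fact that a commutative linear subspace $V$ of $\Mat_2(\K)$ has $\dim V \leq 2$: if $\dim V \geq 2$ then $V$ is not contained in the line $\K I_2$, so $V$ contains some non-scalar $M$, and then $V$ lies in the centralizer of $M$, which equals $\K[M] = \K I_2 \oplus \K M$ since $M$, being non-scalar in $\Mat_2(\K)$, has minimal polynomial of degree $2$; thus $\dim V \leq 2$. As $\dim \calH = 3$, this forces $[P,Q] \neq 0$, and therefore $[\calH,\calH] = \K\,[P,Q]$ is exactly $1$-dimensional, which proves (a). I do not anticipate a genuine obstacle here; the one point worth a second look is that nothing above secretly requires $\K$ to have characteristic other than $2$ — it does not, so the subcase $\calH = \frak{sl}_2(\K)$ (possible only in characteristic $2$, where $I_2 \in \frak{sl}_2(\K)$) is covered with no separate treatment.
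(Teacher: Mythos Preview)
Your proof is correct and follows essentially the same approach as the paper's: for (b) you invoke the ``easy case'' reduction via $I_2$-translation exactly as the paper does, and for (a) you pick a basis $(I_2,P,Q)$ of $\calH$, expand the bracket bilinearly to get $[\calH,\calH]=\K\,[P,Q]$, and then show $[P,Q]\neq 0$ by observing that the centralizer of the non-scalar matrix $P$ is $\K[P]=\Vect(I_2,P)$, which is precisely the paper's argument (you just package it as ``a commutative subspace of $\Mat_2(\K)$ has dimension at most $2$'').
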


\begin{proof}
Point (b) has just been explained. Assume now that $I_2 \in \calH$. Then, there are matrices $A$ and $B$ such that
$(I_2,A,B)$ is a basis of $\calH$. For all $(a,b,c,a',b',c')\in \K^6$, one finds
$$[aI_2+bA+cB\,, \,a'I_2+b'A+c'B]=(bc'-b'c)[A,B].$$
Moreover, as $A$ is a $2 \times 2$ matrix and not a scalar multiple of the identity, it is similar to a companion matrix,
whence the space of all matrices which commute with $A$ is $\Vect(I_2,A)$. This yields $[A,B] \neq 0$.
As obviously $\K=\bigl\{bc'-b'c\mid (b,c,b',c')\in \K^4\bigr\}$, we deduce that $[\calH,\calH]=\K\, [A,B]$
with $[A,B] \neq 0$.
\end{proof}

\subsection{Additional definitions and notation}

\begin{itemize}
\item Given a subset $\calX$ of $\Mat_n(\K)$, we set
$$[\calX,\calX]:=\bigl\{[A,B] \mid (A,B)\in \calX^2\bigr\}.$$

\item The canonical basis of $\K^n$ is denoted by $(e_1,\dots,e_n)$.

\item Given a basis $\calB$ of $\K^n$, the matrix of coordinates of $\calB$ in the canonical basis of $\K^n$
is denoted by $P_\calB$.

\item Given $i$ and $j$ in $\lcro 1,n\rcro$, one denotes by $E_{i,j}$ the matrix of $\Mat_n(\K)$ with all entries zero
except the one at the $(i,j)$-spot, which equals $1$.

\item A matrix of $\Mat_n(\K)$ is \textbf{cyclic} when its minimal polynomial has degree $n$ or, equivalently,
when it is similar to a companion matrix.

\item
The $n$ by $n$ nilpotent Jordan matrix
is denoted by
$$J_n=\begin{bmatrix}
0 & 1 & & (0) \\
& \ddots & \ddots & \\
& & \ddots & 1 \\
(0) & & & 0
\end{bmatrix}.$$

\item A Hessenberg matrix is a square matrix $A=(a_{i,j}) \in \Mat_n(\K)$ in which $a_{i,j}=0$ whenever $i>j+1$.
In that case, we set
$$\ell(A):=\bigl\{j \in \lcro 1,n-1\rcro : \; a_{j+1,j} \neq 0\bigr\}.$$

\item One equips $\Mat_n(\K)$ with the non-degenerate symmetric bilinear form
$$b : (M,N) \mapsto \tr(MN),$$
to which orthogonality refers in the rest of the article.
\end{itemize}
Given $A \in \Mat_n(\K)$, one sets
$$\ad_A : M \in \Mat_n(\K) \mapsto [A,M] \in \Mat_n(\K),$$
which is an endomorphism of the vector space $\Mat_n(\K)$; its kernel is the centralizer
$$\calC(A):=\bigl\{M \in \Mat_n(\K) : AM=MA\bigr\}$$
of the matrix $A$.
Recall the following nice description of the range of $\ad_A$, which follows from the rank theorem and the basic observation that $\ad_A$ is skew-symmetric for the bilinear form $(M,N) \mapsto \tr(MN)$:

\begin{lemma}\label{imad}
Let $A \in \Mat_n(\K)$. The range of $\ad_A$ is the orthogonal of $\calC(A)$, that is the set of all $N \in \Mat_n(\K)$ for which
$$\forall B \in \calC(A), \; \tr(B\,N)=0.$$
\end{lemma}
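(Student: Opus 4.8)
The plan is to split the statement into two standard ingredients: the fact that $\ad_A$ is skew-symmetric for the trace form $b$, and a general principle identifying the range of an endomorphism with the $b$-orthogonal of the kernel of its adjoint whenever $b$ is non-degenerate.

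First I would verify that $\ad_A$ is $b$-skew-symmetric, i.e. $b(\ad_A(M),N)=-b(M,\ad_A(N))$ for all $M,N \in \Mat_n(\K)$. Indeed, $b(\ad_A(M),N)=\tr(AMN)-\tr(MAN)$ whereas $b(M,\ad_A(N))=\tr(MAN)-\tr(MNA)$, and the cyclicity of the trace gives $\tr(MNA)=\tr(AMN)$, so the two expressions are opposite. Equivalently, the $b$-adjoint of $\ad_A$ equals $-\ad_A$; in particular its kernel is again the centralizer $\calC(A)$.

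Next I would apply the following elementary observation, valid for any endomorphism $f$ of a finite-dimensional space $V$ carrying a non-degenerate bilinear form $b$: the range $\im f$ equals $\bigl(\Ker f^{*}\bigr)^{\perp}$, where $f^{*}$ is the $b$-adjoint of $f$. The inclusion $\im f \subseteq \bigl(\Ker f^{*}\bigr)^{\perp}$ is immediate from $b(f(x),z)=b(x,f^{*}(z))$. The reverse inclusion is a dimension count: by the rank theorem and $\rk f=\rk f^{*}$ one has $\dim \im f=\dim V-\dim\Ker f^{*}$, while non-degeneracy of $b$ gives $\dim W^{\perp}=\dim V-\dim W$ for every subspace $W$ (here one simply recalls that $(M,N)\mapsto\tr(MN)$ is non-degenerate on $\Mat_n(\K)$, as is visible on the basis $(E_{i,j})$). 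Taking $f=\ad_A$ and $f^{*}=-\ad_A$, whose kernel is $\calC(A)$, yields $\im(\ad_A)=\calC(A)^{\perp}$, which is exactly the asserted description.

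There is essentially no obstacle here: every step is formal, and the only place the ambient structure is used is the non-degeneracy of the trace form, which underlies the dimension formula for orthogonals. The lemma is really just a convenient repackaging of these facts for repeated use in the sequel.
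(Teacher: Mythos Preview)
Your proof is correct and follows exactly the approach the paper indicates: the skew-symmetry of $\ad_A$ with respect to the trace form together with the rank theorem (via the dimension identity $\dim W^{\perp}=\dim V-\dim W$ for a non-degenerate form) yields $\im(\ad_A)=\calC(A)^{\perp}$. The paper states the lemma without a detailed proof, merely noting these two ingredients, so your write-up is a faithful expansion of the intended argument.
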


In particular, if $A$ is cyclic then its centralizer is $\K[A]=\Vect(I_n,A,\dots,A^{n-1})$, whence
$\im (\ad_A)$ is defined by a set of $n$ linear equations:

\begin{lemma}\label{imadcyclic}
Let $A \in \Mat_n(\K)$ be a cyclic matrix. The range of $\ad_A$ is the set of all $N \in \Mat_n(\K)$ for which
$$\forall k \in \lcro 0,n-1\rcro, \; \tr(A^k\,N)=0.$$
\end{lemma}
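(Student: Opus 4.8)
The plan is to deduce the statement immediately from Lemma~\ref{imad} together with the classical description of the centralizer of a cyclic matrix. First I would recall the well-known fact that $\calC(A)=\K[A]$ when $A$ is cyclic. The inclusion $\K[A]\subseteq \calC(A)$ holds for every matrix; for the reverse inclusion, I would use that, $A$ being cyclic, there is a vector $v\in\K^n$ such that $(v,Av,\dots,A^{n-1}v)$ is a basis of $\K^n$. Given $M\in\calC(A)$, one writes $Mv=q(A)v$ for some polynomial $q$ of degree at most $n-1$, and then, for every $k\in\lcro 0,n-1\rcro$, one has $M(A^kv)=A^k(Mv)=A^kq(A)v=q(A)(A^kv)$; hence $M$ and $q(A)$ agree on a basis of $\K^n$, so $M=q(A)\in\K[A]$. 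This yields $\calC(A)=\Vect(I_n,A,\dots,A^{n-1})$.

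Next I would invoke Lemma~\ref{imad}: the range of $\ad_A$ is the orthogonal of $\calC(A)$ for the bilinear form $b:(M,N)\mapsto\tr(MN)$. Since being orthogonal to a linear subspace is the same as being orthogonal to any spanning family of it, a matrix $N$ belongs to $\im(\ad_A)$ if and only if $\tr(A^kN)=0$ for every $k\in\lcro 0,n-1\rcro$. This is precisely the asserted description.

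I do not expect any serious obstacle here: the argument is entirely formal once Lemma~\ref{imad} is granted, and the only non-routine ingredient is the identity $\calC(A)=\K[A]$ for cyclic $A$, which is standard linear algebra (alternatively, it follows by a dimension count, since $\dim_\K\calC(A)$ equals the degree of the minimal polynomial of $A$ exactly when $A$ is cyclic, and $\dim_\K\K[A]$ always equals that degree, here $n$).
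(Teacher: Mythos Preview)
Your proposal is correct and follows exactly the paper's own reasoning: the paper deduces Lemma~\ref{imadcyclic} directly from Lemma~\ref{imad} together with the fact that $\calC(A)=\K[A]=\Vect(I_n,A,\dots,A^{n-1})$ when $A$ is cyclic. You have merely supplied a standard proof of this latter fact, which the paper takes for granted.
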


\begin{Rem}\label{specialcasesremark}
Interestingly, the two special cases below yield the strategy for Shoda's approach and Albert and Muckenhoupt's, respectively:
\begin{enumerate}[(i)]
\item Let $D$ be a diagonal matrix of $\Mat_n(\K)$ with distinct diagonal entries. Then, the centralizer of $D$
is the space $\calD_n(\K)$ of all diagonal matrices, and hence $\im \ad_D$ is the space of all matrices with diagonal zero.
As every trace zero matrix that is not a scalar multiple of the identity is similar to a matrix with diagonal zero
\cite{Fillmore}, Shoda's theorem of \cite{Shoda} follows easily.

\item Consider the case of the Jordan matrix $J_n$. As $J_n$ is cyclic, Lemma \ref{imadcyclic} yields that $\im (\ad_{J_n})$
is the set of all matrices $A=(a_{i,j}) \in \Mat_n(\K)$ for which $\underset{k=1}{\overset{n-\ell}{\sum}} a_{k+\ell,k}=0$ for all $\ell \in \lcro 0,n-1\rcro$.
In particular, if $A=(a_{i,j}) \in \Mat_n(\K)$ is Hessenberg, then this condition is satisfied whenever $\ell>1$,
and hence $A \in \im (\ad_{J_n})$ if and only if $\tr A=0$ and $\underset{k=1}{\overset{n-1}{\sum}} a_{k+1,k}=0$.
Albert and Muckenhoupt's proof is based upon the fact that, except for a few special cases, the similarity class of a matrix must contain a Hessenberg
matrix $A$ that satisfies the extra equation $\underset{k=1}{\overset{n-1}{\sum}} a_{k+1,k}=0$.
\end{enumerate}
\end{Rem}

\section{Proof of the main theorem}\label{finalproofsection}

\subsection{Proof strategy}\label{proofstrategy}

Let $\calH$ be a hyperplane of $\Mat_n(\K)$. We already know that $[\calH,\calH]=\mathfrak{sl}_n(\K)$ if $I_n \not\in \calH$.
Thus, in the rest of the article, we will only consider the case when $I_n \in \calH$.

Our proof will use three basic but potent principles:

\begin{enumerate}[(1)]
\item Given $A \in \mathfrak{sl}_n(\K)$, if some $A_1 \in \calH$
satisfies $A \in \im (\ad_{A_1})$ and $\calC(A_1) \not\subset \calH$, then $A \in [\calH,\calH]$.
Indeed, in that situation, we find $A_2 \in \Mat_n(\K)$ such that
$A=[A_1,A_2]$, together with some $A_3 \in \calC(A_1)$ for which $A_3 \not\in \calH$.
Then, the affine line $A_2+\K A_3$ is included in the inverse image of $\{A\}$ by
$\ad_{A_1}$ and it has exactly one common point with $\calH$.

\item Let $(A,B)\in \mathfrak{sl}_n(\K)^2$ and $\lambda \in \K$. If there are matrices $A_1$ and $A_2$
such that $A=[A_1,A_2]$ and $\tr(B\,A_1)=\tr(B\,A_2)=0$, then we also have $\tr((B-\lambda\,A)A_1)=\tr((B-\lambda\, A)A_2)=0$. \\
Indeed, equality $A=[A_1,A_2]$ ensures that $\tr(A\,A_1)=\tr(A\,A_2)=0$ (see Lemma \ref{imad}).

\vskip 2mm
\item Let $(A,B)\in \Mat_n(\K)^2$ and $P \in \GL_n(\K)$.
Setting $\calG:=\{B\}^\bot$, we see that the assumption $A \in [\calG,\calG]$ implies
$PAP^{-1} \in [P\calG P^{-1},P\calG P^{-1}]$, while $P\calG P^{-1}=\{PBP^{-1}\}^\bot$.
\end{enumerate}

Now, let us give a rough idea of the proof strategy.
One fixes $A \in \mathfrak{sl}_n(\K)$ and aims at proving that $A \in [\calH,\calH]$.
We fix a non-zero matrix $B$ such that $\calH=\{B\}^\bot$.

Our basic strategy is the Albert-Muckenhoupt method: we try to find a cyclic matrix $M$ in $\calH$
such that $A \in \im(\ad_M)$; if $A \not\in \ad_M(\calH)$, then we learn that $\calC(M) \subset \calH$
(see principle (1) above), which yields additional information on $B$.
Most of the time, we will search for such a cyclic matrix $M$ among the nilpotent matrices with rank $n-1$.
The most favorable situation is the one where $A$ is either upper-triangular or Hessenberg with enough non-zero sub-diagonal entries: in these cases, we search for a good matrix $M$ among the strictly upper-triangular matrices with rank $n-1$ (see Lemma \ref{prelimlemma}). If this method yields no solution, then we learn precious information on the
simultaneous reduction of the endomorphisms $X \mapsto AX$ and $X \mapsto BX$.
Using changes of bases, we shall see that either the above method delivers a solution for a pair $(A',B')$
that is simultaneously similar to $(A,B)$, in which case Principle (3) shows that we have a solution for $(A,B)$,
or $(I_n,A,B)$ is locally linearly dependent (see the definition below), or else $n=3$ and $A$ is similar to $\lambda I_3+E_{2,3}$ for some $\lambda \in \K$.
When $(I_n,A,B)$ is locally linearly dependent and $A$ is not of that special type, one uses the classification of
locally linearly dependent triples to reduce the situation to the one where $B=I_n$, that is
$\calH=\mathfrak{sl}_n(\K)$, and in that case the proof is completed by invoking Theorem \ref{hypercan}.
Finally, the case when $A$ is similar to $\lambda I_3+E_{2,3}$ for some $\lambda \in \K$ will be dealt with independently (Section \ref{specialcasesection})
by applying Albert and Muckenhoupt's method for well-chosen companion matrices instead of a Jordan nilpotent matrix.

Let us finish these strategic considerations by recalling the notion of local linear dependence:

\begin{Def}
Given vector spaces $U$ and $V$, linear maps $f_1,\dots,f_n$ from $U$ to $V$ are called locally linearly dependent (in abbreviated form: LLD)
when the vectors $f_1(x),\dots,f_n(x)$ are linearly dependent for all $x \in U$.
\end{Def}

We adopt a similar definition for matrices by referring to the linear maps that are canonically associated with these matrices.

\subsection{The basic lemma}

\begin{lemma}\label{prelimlemma}
Let $(A,B) \in \frak{sl}_n(\K)^2$ be with $B=(b_{i,j}) \neq 0$, and set $\calH:=\{B\}^\bot$.
In each one of the following cases, $A$ belongs to $[\calH,\calH]$:
\begin{enumerate}[(a)]
\item $\# \K>2$, $A$ is upper-triangular and $B$ is not Hessenberg.
\item $\# \K>3$, $A$ is Hessenberg and there exist $i \in \lcro 2,n-1\rcro$ and $j \in \lcro 3,n\rcro \setminus \{i\}$ such that $\{1,i\} \subset \ell(A)$ and $b_{j,1} \neq 0$.
\end{enumerate}
\end{lemma}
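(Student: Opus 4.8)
The strategy is to produce, in each case, a strictly upper-triangular matrix $M$ of rank $n-1$ lying in $\calH$ such that $A\in\im(\ad_M)$ and $\calC(M)\not\subset\calH$, and then invoke Principle (1) of Section \ref{proofstrategy}. Since any strictly upper-triangular matrix of rank $n-1$ is nilpotent and cyclic, hence similar to $J_n$, the condition $A\in\im(\ad_M)$ can be checked via Lemma \ref{imadcyclic}: after conjugating by a permutation matrix (Principle (3)), it amounts to the vanishing of the ``diagonal sums'' of $A$ relative to the subdiagonal pattern prescribed by $M$. We will also use freely the observation in Remark \ref{specialcasesremark}(ii): for a Hessenberg $A$, the superdiagonal-$\ell$ sums with $\ell\geq 2$ vanish automatically, so only the trace condition ($\tr A=0$, already given) and the first subdiagonal sum $\sum_k a_{k+1,k}$ are at issue — and these must be matched against $M$ rather than $J_n$.

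\emph{Case (a).} Here $A$ is upper-triangular, so $\tr A=0$ is the only constraint for membership in $\im(\ad_{J_n})$, and $J_n\in\mathfrak{sl}_n(\K)$; but we must stay inside $\calH$. The plan is to search for $M$ among $\K$-combinations $\sum_{k=1}^{n-1}\lambda_k E_{k,k+1}$ with all $\lambda_k\neq 0$. Such an $M$ is strictly upper-triangular of rank $n-1$, it satisfies $A\in\im(\ad_M)$ (the relevant diagonal sum of $A$ is just $\tr A=0$), and its centralizer is $\K[M]$, which contains $I_n$ — so $\calC(M)\not\subset\calH$ would follow \emph{unless} $\calC(M)\subset\calH$, but actually we want the reverse: we need $\calC(M)\not\subset\calH$, equivalently $\K[M]\not\subset\{B\}^\bot$, i.e. $\tr(B\,M^p)\neq 0$ for some $p$. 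The constraint $M\in\calH$ reads $\tr(BM)=0$, i.e. $\sum_{k=1}^{n-1} b_{k+1,k}\,\lambda_k=0$. Since $B$ is \emph{not} Hessenberg, some entry $b_{i,j}$ with $i>j+1$ is nonzero; choosing the $\lambda_k$ generically subject to the single linear relation $\sum b_{k+1,k}\lambda_k=0$ and to $\lambda_k\neq0$ (possible since $\#\K>2$ gives at least two nonzero scalars and enough freedom), one arranges $\tr(B\,M^{\,j-i})\neq 0$ or, more simply, one uses that the polynomial identities $\tr(BM^p)=0$ for all $p\geq0$ would force $B$ orthogonal to $\K[M]=\calC(M)$, hence $B\in\im(\ad_M)$, hence $B$ would be a commutator with the cyclic $M$ — and a short computation with the non-Hessenberg entry $b_{i,j}$ rules this out. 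Thus $\calC(M)\not\subset\calH$ and Principle (1) applies.

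\emph{Case (b).} Now $A$ is Hessenberg with $\{1,i\}\subset\ell(A)$, so $a_{2,1}\neq0$ and $a_{i+1,i}\neq0$. We look for $M=\sum_{k\in S}\lambda_k E_{k,k+1}$ of rank $n-1$ (so $S=\lcro1,n-1\rcro$, all $\lambda_k\neq0$), but now to kill the subdiagonal sum of $A$ against the pattern of $M$ we must instead reorder: conjugate $A$ by a permutation $P$ sending the ``chain'' of $M$'s superdiagonal to a path visiting the indices so that $A$'s nonzero subdiagonal entries $a_{2,1}$ and $a_{i+1,i}$ land in positions whose contributions can cancel. Concretely, with two active subdiagonal entries of $A$ and the freedom in the two (or more) scalars $\lambda_k$, the single equation ``$A\in\im(\ad_M)$'' (one extra linear condition beyond $\tr A=0$) becomes a nontrivial linear relation that can be solved with all $\lambda_k\neq0$ because $\#\K>3$ supplies at least three nonzero scalars; here the hypothesis $i\neq j$ and $i\in\lcro2,n-1\rcro$, $j\in\lcro3,n\rcro$ guarantees the positions of $a_{i+1,i}$ and of the chosen $b_{j,1}\neq0$ are distinct, so the two linear conditions ($M\in\calH$, i.e. $\tr(BM)=0$, using $b_{j,1}\neq0$; and $A\in\im(\ad_M)$) are independent and jointly solvable with all $\lambda_k\neq0$. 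Finally $\calC(M)\not\subset\calH$ is obtained exactly as in Case (a): were $\K[M]\subset\{B\}^\bot$, then $B$ would be a commutator of $M$ with something, contradicting the presence of the off-Hessenberg-type entry $b_{j,1}\neq0$ relative to $M$'s reordered shape.

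\emph{Main obstacle.} The delicate point is the simultaneous bookkeeping in Case (b): choosing the permutation $P$ and the scalars $(\lambda_k)$ so that \emph{all} of (i) $M$ has rank $n-1$, (ii) $M\in\calH$, (iii) $A\in\im(\ad_M)$, and (iv) $\calC(M)\not\subset\calH$ hold at once, with only the cardinality bound $\#\K>3$ to spend. Conditions (ii) and (iii) are two linear equations in the $\lambda_k$'s, and the genuine difficulty is verifying they are not incompatible with the open condition $\prod\lambda_k\neq0$ — this is where the precise hypotheses $\{1,i\}\subset\ell(A)$ (giving \emph{two} degrees of freedom among relevant subdiagonal slots) and $i\neq j$, $b_{j,1}\neq0$ (separating the two equations) are used, and where one must be careful that $\#\K>3$ rather than merely $\#\K>2$ is needed.
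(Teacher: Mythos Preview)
Your overall framework (build a cyclic strictly upper-triangular $M\in\calH$ with $A\in\im(\ad_M)$ and invoke Principle (1)) is correct, but the execution has a genuine gap that the paper's proof handles differently.

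\textbf{The family of $M$'s is too narrow.} In Case (a) you restrict to $M=\sum_{k=1}^{n-1}\lambda_kE_{k,k+1}$ and impose $\tr(BM)=\sum_k b_{k+1,k}\lambda_k=0$ directly on the $\lambda_k$'s. This can force some $\lambda_k=0$: for instance if $b_{2,1}\neq0$ while $b_{k+1,k}=0$ for all $k\geq2$ (still compatible with $B$ non-Hessenberg), the constraint reads $\lambda_1=0$ and no rank-$(n-1)$ matrix of your shape lies in $\calH$. The paper avoids this by enlarging the family: it adds a correction term $-\beta E_{l',l}$ at a position $(l',l)$ with $l-l'>1$ and $b_{l,l'}\neq0$ (such a position exists precisely because $B$ is not Hessenberg). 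The single scalar $\beta$ absorbs the constraint $\tr(BM)=0$, leaving the superdiagonal parameters $x_1,\dots,x_{n-1}$ \emph{completely free} in $(\K^*)^{n-1}$. In Case (b) the same trick is used, together with a second dependent entry $-\alpha E_{1,2}$ (with $\alpha=\sum_k a_{k+2,k+1}x_k$) that absorbs the condition $\tr(MA)=0$; there is no permutation conjugation.

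\textbf{The argument for $\calC(M)\not\subset\calH$ is not as you describe.} You argue that if $\K[M]\subset\calH$ then $B\in\im(\ad_M)$, and claim this contradicts a non-Hessenberg entry of $B$. But for a \emph{fixed} $M$ there is no such contradiction: the equations $\tr(M^pB)=0$ are weighted diagonal sums that can perfectly well vanish for particular $\lambda_k$'s even when $b_{l,l'}\neq0$. The paper instead runs a \emph{reductio ad absurdum}: assuming $A\notin[\calH,\calH]$, Principle (1) forces $\calC(M)\subset\calH$ for \emph{every} choice of free parameters, hence $\tr(M^{l-l'}B)=0$ identically on $(\K^*)^{n-1}$. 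This is a polynomial identity of degree at most $1$ in each variable, so with $\#\K^*\geq2$ the polynomial is zero, contradicting $b_{l,l'}\neq0$. Thus the freedom in the $x_k$'s, which your constraint destroys, is precisely what drives the contradiction.

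In Case (b) the analogous polynomial $\mathbf{q}$ (coming from $\tr(M^{j-1}B)=0$) has degree up to $3$ in each variable, which is where $\#\K>3$ enters; the paper even needs a separate argument when $\#\K=4$, splitting on whether $i>j$ or $i<j$. Your sketch does not anticipate this, and the ``two linear conditions are independent'' heuristic does not capture the actual mechanism.
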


\begin{proof}
We use a \emph{reductio ad absurdum}, assuming that $A \not\in [\calH,\calH]$. We write $A=(a_{i,j})$.

\begin{enumerate}[(a)]
\item Assume that $\# \K>2$, that $A$ is upper-triangular and that $B$ is not Hessenberg.
We choose a pair $(l,l')\in \lcro 1,n\rcro^2$ such that $b_{l,l'}\neq 0$, with
$l-l'$ maximal for such pairs. Thus, $l-l'>1$.
Let $(x_1,\dots,x_{n-1}) \in (\K^*)^{n-1}$, and set
$$\beta:=\frac{\underset{k=1}{\overset{n-1}{\sum}} b_{k+1,k}\,x_k}{b_{l,l'}} \quad \text{and} \quad M:=\underset{k=1}{\overset{n-1}{\sum}}x_k\,E_{k,k+1}-\beta\,E_{l',l.}$$
We see that $M$ is nilpotent of rank $n-1$, and hence it is cyclic.
One notes that $M \in \calH$. Moreover, $\tr(A M^k)=0$ for all $k \geq 1$, because $A$ is upper-triangular and $M$
is strictly upper-triangular, whereas $\tr(A)=0$ by assumption. Thus, $A \in \im (\ad_M)$.
As it is assumed that $A \not\in \ad_M(\calH)$, one deduces from principle (1) in Section \ref{proofstrategy}
that $\calC(M) \subset \calH$; in particular $\tr(M^{l-l'} B)=0$, which, as $b_{i,j}=0$ whenever $i-j>l-l'$, reads
$$b_{l-l'+1,1}\,x_1x_2 \cdots x_{l-l'}+b_{l-l'+2,2}\,x_2x_3\cdots x_{l-l'+1}+\cdots +
b_{n,n-l+l'}\,x_{n-l+l'}\cdots x_{n-1}=0.$$
Here, we have a polynomial with degree at most $1$ in each variable $x_i$, and this polynomial vanishes
at every $(x_1,\dots,x_{n-1}) \in (\K^*)^{n-1}$, with $\# \K^* \geq 2$.
It follows that $b_{i,j}=0$ for all $(i,j)\in \lcro 1,n\rcro^2$ with $i-j=l-l'$, and the special
case $(i,j)=(l,l')$ yields a contradiction.

\vskip 2mm
\item Now, we assume that $\# \K>3$, that $A$ is Hessenberg and that there exist $i \in \lcro 2,n\rcro$ and
$j \in \lcro 3,n\rcro \setminus \{i\}$ such that $\{1,i\} \subset \ell(A)$ and $b_{j,1} \neq 0$.
The proof strategy is similar to the one of case (a), with additional technicalities.
One chooses a pair $(l,l')\in \lcro 1,n\rcro^2$ such that $b_{l,l'} \neq 0$, with $l-l'$ maximal
for such pairs (again, the assumptions yield $l-l'\geq j-1>1$). As $a_{2,1} \neq 0$, no generality is lost in assuming that $a_{2,1}=1$.
We introduce the formal polynomial
$$\mathbf{p}:=\underset{k=1}{\overset{n-2}{\sum}}a_{k+2,k+1}\,\mathbf{x}_k \in \K[\mathbf{x}_1,\mathbf{x}_2,\dots,\mathbf{x}_{n-2}].$$
Let $(x_1,\dots,x_{n-2}) \in (\K^*)^{n-2}$,
and set
$$\alpha:=\mathbf{p}(x_1,\dots,x_{n-2}) \quad \text{and} \quad
\beta:=\frac{\alpha\,b_{2,1}- \underset{k=1}{\overset{n-2}{\sum}}x_k \, b_{k+2,k+1}}{b_{l,l'}}\cdot$$
Finally, set
$$M:=-\alpha\,E_{1,2}+\underset{k=1}{\overset{n-2}{\sum}}x_k\,E_{k+1,k+2}+\beta\,E_{l',l}.$$
The definition of $M$ shows that $\tr(MA)=\tr(MB)=0$, and in particular $M \in \calH$.
Assume now that $\mathbf{p}(x_1,\dots,x_{n-2}) \neq 0$. Then, $M$ is cyclic as it is nilpotent with rank $n-1$.
As $A$ is Hessenberg, we also see that $\tr(M^k\,A)=0$ for all $k \geq 2$.
Thus, $\tr(M^kA)=0$ for every non-negative integer $k$, and hence Lemma \ref{imadcyclic} yields $A \in \im (\ad_M)$.
It ensues that $\calC(M) \subset \calH$, and in particular $\tr(M^{j-1} B)=0$.
As $l-l'>1$, we see that, for all $(a,b)\in \lcro 1,n\rcro^2$ with $b-a \leq l-l'$,
and every integer $c>1$, the matrices $M^c$ and $\Bigl(-\alpha\,E_{1,2}+\underset{k=1}{\overset{n-2}{\sum}}x_k\,E_{k+1,k+2}\Bigr)^c$
have the same entry at the $(a,b)$-spot; in particular, for all $k \in \lcro 2,n-j+1\rcro$, the entry of
$M^{j-1}$ at the $(k,j+k-1)$-spot is $x_{k-1}x_k \cdots x_{k-3+j}$, and the entry of $M^{j-1}$ at the $(1,j)$-spot is
$-\alpha\, x_1\cdots x_{j-2}$; moreover, for all $(a,b)\in \lcro 1,n\rcro^2$ with $b-a \leq \ell-\ell'$ and $b-a \neq j-1$,
the entry of $M^{j-1}$ at the $(a,b)$-spot is $0$.
Therefore, equality $\tr(M^{j-1}B)=0$ yields
$$-b_{j,1}\,\alpha\,x_1\cdots x_{j-2}+b_{j+1,2}\,x_1\cdots x_{j-1}+
b_{j+2,3}\,x_2\cdots x_j+\cdots+b_{n,n-j+1}\,x_{n-j}\cdots x_{n-2}=0.$$
We conclude that we have established the following identity:
for the polynomial
$$\mathbf{q}:=\mathbf{p}\times \Bigl(-b_{j,1}\,\mathbf{p}\,\mathbf{x}_1\cdots \mathbf{x}_{j-2}+b_{j+1,2}\,\mathbf{x}_1\cdots \mathbf{x}_{j-1}+
b_{j+2,3}\,\mathbf{x}_2\cdots \mathbf{x}_j+\cdots+b_{n,n-j+1}\,\mathbf{x}_{n-j}\cdots \mathbf{x}_{n-2}\Bigr),$$
we have
$$\forall (x_1,\dots,x_{n-2}) \in (\K^*)^{n-2}, \quad
\mathbf{q}(x_1,\dots,x_{n-2})=0.$$
Noting that $\mathbf{q}$ has degree at most $3$ in each variable, we split the discussion into two main cases.

\textbf{Case 1. $\# \K>4$.} \\
Then, $\# \K^*> 3$ and hence $\mathbf{q}=0$.
As $\mathbf{p} \neq 0$ (remember that $a_{i+1,i}\neq 0$),
it follows that
$$-b_{j,1}\,\mathbf{p}\,\mathbf{x}_1\cdots \mathbf{x}_{j-2}+b_{j+1,2}\,\mathbf{x}_1\cdots \mathbf{x}_{j-1}+
b_{j+2,3}\,\mathbf{x}_2\cdots \mathbf{x}_j+\cdots+b_{n,n-j+1}\,\mathbf{x}_{n-j}\cdots \mathbf{x}_{n-2}=0.$$
As $b_{j,1} \neq 0$, identifying the coefficients of the monomials of type
$\mathbf{x}_1\cdots \mathbf{x}_{j-2}\mathbf{x}_k$ with $k \in \lcro 1,n-2\rcro \setminus \{j-1\}$
leads to $a_{k+2,k+1}=0$ for all such $k$. This contradicts the assumption that $a_{i+1,i} \neq 0$.

\vskip 2mm
\textbf{Case 2. $\# \K=4$.} \\
A polynomial of $\K[t]$ which vanishes at every non-zero element of $\K$ must be a multiple of $t^3-1$.
In particular, if such a polynomial has degree at most $3$, we may write it as
$\alpha_3\,t^3+\alpha_2\,t^2+\alpha_1\,t+\alpha_0$, and we obtain
$\alpha_3=-\alpha_0$. From there, we split the discussion into two subcases.

\noindent \textbf{Subcase 2.1.} $i >j$. \\
Then, $\mathbf{q}$ has degree at most $2$ in $\mathbf{x}_{i-1}$.
Thus, if we see $\mathbf{q}$ as a polynomial in the sole variable $\mathbf{x}_{i-1}$,
the coefficients of this polynomial must vanish for every specialization of $\mathbf{x}_1,\dots,\mathbf{x}_{i-2},\mathbf{x}_i,\dots,\mathbf{x}_{n-2}$ in $\K^*$;
extracting the coefficients of $(\mathbf{x}_{i-1})^2$ leads to the identity
$$\forall (x_1,\dots,x_{i-2},x_i,\dots,x_{n-2})\in (\K^*)^{n-3}, \quad
-b_{j,1} (a_{i+1,i})^2 \,x_1\cdots x_{j-2}+\mathbf{r}(x_1,\dots,x_{n-2})=0$$
where $\mathbf{r}=\underset{k=i-j+1}{\overset{n-j}{\sum}}  a_{i+1,i}\,b_{j+k,k+1}\, \mathbf{x}_k \cdots \mathbf{x}_{i-2} \mathbf{x}_{i} \cdots \mathbf{x}_{j-2+k}$.
Noting that the degree of $-b_{j,1} (a_{i+1,i})^2\, \mathbf{x}_1\cdots \mathbf{x}_{j-2}+\mathbf{r}$
is at most $1$ in each variable, we deduce that this polynomial is zero.
This contradicts the fact that the coefficient of $\mathbf{x}_1\cdots \mathbf{x}_{j-2}$ is
$-b_{j,1} (a_{i+1,i})^2$, which is non-zero according to our assumptions.

\noindent \textbf{Subcase 2.2.} $i<j$. \\
Let us fix $x_1,\dots,x_{i-2},x_i,\dots,x_{n-2}$ in $\K^*$.
The coefficient of $\mathbf{q}(x_1,\dots,x_{i-2},\mathbf{x}_{i-1},x_i,\dots,x_{n-2})$ with respect to $(\mathbf{x}_{i-1})^3$ is
$$-b_{j,1}(a_{i+1,i})^2\,x_1\cdots x_{i-2}x_i\cdots x_{j-2}.$$
One the other hand, with
$$\mathbf{s}:=\underset{i \leq k \leq n-j}{\sum}b_{j+k,k+1} \underset{\ell=k}{\overset{j-2+k}{\prod}} \mathbf{x}_\ell,$$
the coefficient of $\mathbf{q}(x_1,\dots,x_{i-2},\mathbf{x}_{i-1},x_i,\dots,x_{n-2})$ with respect to $(\mathbf{x}_{i-1})^0$ is
$$\mathbf{s}(x_1,\dots,x_{i-2},x_i,\dots,x_{n-2})\,\underset{k \in \lcro 1,n-2\rcro \setminus \{i-1\}}{\sum}\,a_{k+2,k+1}\, x_k.$$
Therefore,
\begin{multline*}
\forall (x_1,\dots,x_{n-2})\in (\K^*)^{n-2}, \quad \\
b_{j,1}(a_{i+1,i})^2\,x_1\cdots x_{i-2}x_i\cdots x_{j-2}
=\mathbf{s}(x_1,\dots,x_{i-2},x_i,\dots,x_{n-2}) \\
\times \underset{k \in \lcro 1,n-2\rcro \setminus \{i-1\}}{\sum}\,a_{k+2,k+1}\, x_k.
\end{multline*}
On both sides of this equality, we have polynomials of degree at most $2$ in each variable.
As $\# (\K^*) >2$, we deduce the identity
$$b_{j,1}(a_{i+1,i})^2\,\mathbf{x}_1\dots \mathbf{x}_{i-2}\mathbf{x}_i\cdots \mathbf{x}_{j-2}
=\mathbf{s}\times \underset{k \in \lcro 1,n-2\rcro \setminus \{i-1\}}{\sum}\,a_{k+2,k+1}\, \mathbf{x}_k.$$
However, on the left-hand side of this identity is a non-zero homogeneous polynomial of degree $j-3$,
whereas its right-hand side is a homogeneous polynomial of degree $j$.
There lies a final contradiction.
\end{enumerate}
\end{proof}

\subsection{Reduction to the case when $I_n,A,B$ are locally linearly dependent}

In this section, we use Lemma \ref{prelimlemma} to prove the following result:

\begin{lemma}\label{reductionlemma}
Assume that $\# \K > 3$, let $(A,B) \in \frak{sl}_n(\K)^2$ be such that $B \neq 0$, and set
$\calH:=\{B\}^\bot$.
Then, either $A \in [\calH,\calH]$, or $(I_n,A,B)$ is LLD, or
$A$ is similar to $\lambda I_3+E_{2,3}$ for some $\lambda \in \K$.
\end{lemma}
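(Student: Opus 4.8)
The plan is to exploit Principle (3) from Section \ref{proofstrategy}, so that we may freely replace $(A,B)$ by any simultaneously similar pair $(PAP^{-1},PBP^{-1})$, and then try to drive the situation into the domain of Lemma \ref{prelimlemma}. First I would dispose of the trivial case $A=0$ (then $0=[\calH,\calH]$ automatically, or $(I_n,A,B)$ is LLD vacuously), and also the case where $A$ is a scalar matrix (impossible since $\tr A=0$ would force $A=0$). So assume $A$ is non-scalar. The key dichotomy is whether $A$ is cyclic or not.

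If $A$ is \emph{non-cyclic}, its centralizer $\calC(A)$ has dimension $>n$. I would argue that either $\calC(A) \not\subset \calH$ — in which case we can try to realize $A$ itself in $\im(\ad_{A_1})$ for a suitable $A_1$ — or, more directly, that a non-cyclic trace-zero matrix is similar to a matrix with zero first column (or more precisely can be put in a form where Lemma \ref{prelimlemma}(a) applies with $B$ forced to be non-Hessenberg, else $(I_n,A,B)$ is LLD). The cleaner route: since $A$ is non-scalar, by Fillmore's theorem $A$ is similar to a matrix with diagonal zero, and in fact (refining this) to an upper-triangular matrix with zero diagonal, i.e.\ a strictly upper-triangular matrix. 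After conjugating so that $A$ is strictly upper-triangular, apply Lemma \ref{prelimlemma}(a): if $B$ is not Hessenberg we are done. So we are reduced to the case where, for \emph{every} $P$ making $PAP^{-1}$ strictly upper-triangular, the conjugate $PBP^{-1}$ is Hessenberg. The real work is to show that this rigidity forces $(I_n,A,B)$ to be LLD (or forces the exceptional $n=3$ case): one fixes one such $P$, then varies $P$ over the stabilizer of the flag (the upper-triangular group) and over other conjugations keeping $A$ strictly upper-triangular, and extracts from "$PBP^{-1}$ is always Hessenberg" enough linear constraints on $B$ relative to $A$ and $I_n$.

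If $A$ is \emph{cyclic}, then $A$ is similar to a companion matrix, hence to a Hessenberg matrix with $\ell(A)=\lcro 1,n-1\rcro$ (all subdiagonal entries nonzero). Now Lemma \ref{prelimlemma}(b) is available provided we can find, after a further conjugation preserving the Hessenberg form with full subdiagonal, a conjugate of $B$ with a nonzero entry $b_{j,1}$ for some $j \in \lcro 3,n\rcro$ (taking $i=2 \in \ell(A)$, say, so that $\{1,i\}=\{1,2\}\subset \ell(A)$, and we need $j \ne 2$). When $n\ge 4$ this should be flexible enough: I would show that if no conjugate of $B$ (within the allowed conjugations) has a nonzero $(j,1)$ entry for some $j\ge 3$, then $B$ lies in a very small space tied to $A$ and $I_n$, forcing LLD. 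When $n=3$ the constraint "$b_{3,1}\neq 0$" is the only option ($j$ must be $3$), and this is exactly where the exceptional case $A \sim \lambda I_3 + E_{2,3}$ can slip through: for that particular $A$ (which is cyclic when $\lambda$... wait, $\lambda I_3+E_{2,3}$ is \emph{not} cyclic, its minimal polynomial has degree $2$), the non-cyclic analysis degenerates and no conjugation can be arranged to apply Lemma \ref{prelimlemma}, so it must be quarantined and handled separately (as announced, in Section \ref{specialcasesection}).

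The main obstacle I anticipate is the "rigidity extraction" step in both branches: proving that the failure of Lemma \ref{prelimlemma} to apply after \emph{all} admissible changes of basis forces the linear dependence of $A_1$-type matrices, i.e.\ forces $(I_n,A,B)$ to be LLD. Concretely, one must translate a statement of the form "for all $P$ in a certain algebraic group, $PBP^{-1}$ has a prescribed zero pattern" into "$B$ belongs to $\Vect(I_n,A)$ plus something in $\calC(A)$", and then see why that yields local linear dependence of the triple. This is a Zariski-density / polynomial-identity argument, similar in spirit to the multivariable polynomial vanishing arguments already used inside the proof of Lemma \ref{prelimlemma}, but now applied to the entries of $PBP^{-1}$ as polynomial functions of the parameters of $P$; getting the bookkeeping right for all the sub-cases (cyclic vs.\ not, small $n$, whether $I_n \in \Vect(A,B)$) is where the bulk of the effort and the care with the hypothesis $\#\K>3$ will go.
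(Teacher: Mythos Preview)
Your proposal contains a genuine error and, more importantly, misses the organizing idea that makes the proof go through.

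The error: in your non-cyclic branch you write that a non-scalar trace-zero matrix is similar to a strictly upper-triangular matrix. That is false --- being similar to a strictly upper-triangular matrix is equivalent to being nilpotent, and e.g.\ $\Diag(1,1,-2)$ is non-cyclic, trace zero, and not nilpotent. Even the weaker claim that $A$ can be made upper-triangular is unavailable over a general field $\K$: nothing in the hypotheses guarantees that $A$ is triangularizable (its eigenvalues need not lie in $\K$). So Lemma~\ref{prelimlemma}(a) cannot be invoked globally, and your ``fix one triangular form of $A$, then vary $P$ over the flag stabilizer'' programme does not get off the ground.

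The structural gap: the paper does not argue via a global cyclic/non-cyclic dichotomy on $A$ at all. Instead it proves local linear dependence \emph{pointwise}: for each nonzero $x\in\K^n$ it shows $\rk(x,Ax,Bx)\le 2$. The case split is on the ``order'' of $x$, namely $\dim\Vect(x,Ax,A^2x)$. If this equals $3$, one extends $(x,Ax,A^2x)$ to a basis making $A$ Hessenberg with $\{1,2\}\subset\ell(A')$, and Lemma~\ref{prelimlemma}(b) forces the first column of the conjugate of $B$ to have zeros from row $3$ down, i.e.\ $Bx\in\Vect(x,Ax)$. If the order is $2$, one writes $\K^n=\Vect(x,Ax)\oplus F$ and branches on whether $A$ acts as a scalar on $F$; when it does not, one uses two linearly independent non-eigenvectors in $F$ (Lemma~\ref{eigenvectorslemma}) and Lemma~\ref{prelimlemma}(b) with $\{1,3\}\subset\ell(A')$; when it does, one uses either the decomposition Lemma~\ref{decompositionlemma} or a direct eigenvector argument. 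The exceptional shape $\lambda I_3+E_{2,3}$ emerges precisely in this last branch when $n=3$ and $\rk(A-\lambda I_3)=1$. Your ``rigidity extraction'' black box is thus replaced by a concrete mechanism: choose the basis to start at the specific vector $x$ you care about, read off that $Bx$ lies in a small subspace, and when one basis is not enough intersect the constraints coming from two different completions of $(x,Ax)$.
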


In order to prove Lemma \ref{reductionlemma}, one needs two preliminary results.
The first one is a basic result in the theory of matrix spaces with rank bounded above.

\begin{lemma}[Lemma 2.4 of \cite{dSPLLD1}]\label{decompositionlemma}
Let $m,n,p,q$ be positive integers, and $\calV$ be a linear subspace of $\Mat_{m+p,n+q}(\K)$
in which every matrix splits up as
$$M=\begin{bmatrix}
A(M) & [?]_{m \times q} \\
[0]_{p \times n} & B(M)
\end{bmatrix}$$
where $A(M)  \in \Mat_{m,n}(\K)$ and $B(M) \in \Mat_{p,q}(\K)$.
Assume that there is an integer $r$ such that $\forall M \in \calV, \; \rk M \leq r <\# \K$,
and set $s:=\max \{\rk A(M) \mid M \in \calV\}$ and $t:=\max \{\rk B(M) \mid M \in \calV\}$.
Then, $s+t \leq r$.
\end{lemma}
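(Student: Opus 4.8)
The plan is to restrict to a generic one-parameter sub-family of $\calV$ and to count the parameter values at which it can fail to realize the maximal ranks of the two diagonal blocks. I would start from the elementary block inequality: for $M=\begin{bmatrix} A & C \\ 0 & B\end{bmatrix}$ one has $\rk M\ge \rk A+\rk B$, because the projection of the column space of $M$ onto the last coordinates is onto the column space of $B$, while the kernel of that projection meets $\im M$ in a subspace containing the column space of $\begin{bmatrix} A \\ 0\end{bmatrix}$. If $s=0$ then $A(M)=0$ for every $M\in\calV$, so $\rk B(M)\le\rk M\le r$ for all $M$, whence $t\le r$ and we are done; likewise if $t=0$. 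So assume $s,t\ge 1$, fix $M_1,M_2\in\calV$ with $\rk A(M_1)=s$ and $\rk B(M_2)=t$, and suppose for contradiction that $s+t>r$. Applying the block inequality to $M_1$ and to $M_2$ yields the two bounds that drive the proof: $\rk B(M_1)\le r-s$ and $\rk A(M_2)\le r-t$.

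Next I would run over the line $\lambda\mapsto M_1+\lambda M_2$, which lies in $\calV$ since $\calV$ is a linear subspace. Pick an invertible $s\times s$ submatrix $P$ of $A(M_1)$ and let $Q$ be the submatrix of $A(M_2)$ on the same rows and columns; the corresponding minor of $A(M_1+\lambda M_2)$ is $\det(P+\lambda Q)$, a polynomial in $\lambda$ of degree at most $\rk Q\le \rk A(M_2)\le r-t$ (the coefficient of $\lambda^{\,j}$ being a combination of $j\times j$ minors of $Q$) which does not vanish at $\lambda=0$; hence $\rk A(M_1+\lambda M_2)=s$ whenever this minor is non-zero, which excludes at most $r-t$ scalars. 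For the other block, observe that for $\lambda\ne 0$ one has $\rk B(M_1+\lambda M_2)=\rk\bigl(\mu B(M_1)+B(M_2)\bigr)$ with $\mu=\lambda^{-1}$; choosing an invertible $t\times t$ submatrix $S$ of $B(M_2)$ and the submatrix $R$ of $B(M_1)$ on the same rows and columns, the minor $\det(\mu R+S)$ is a polynomial in $\mu$ of degree at most $\rk R\le \rk B(M_1)\le r-s$ that does not vanish at $\mu=0$. So $\rk B(M_1+\lambda M_2)=t$ for every $\lambda\ne 0$ outside a set of at most $r-s$ scalars, and allowing in addition for $\lambda=0$ there are at most $r-s+1$ exceptional scalars for the $B$-block.

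Adding up, at most $(r-t)+(r-s+1)=2r-(s+t)+1$ scalars are bad, and this is $\le r$ because $s+t\ge r+1$. Since $r<\#\K$, some $\lambda\in\K$ simultaneously satisfies $\rk A(M_1+\lambda M_2)=s$ and $\rk B(M_1+\lambda M_2)=t$; the block inequality then gives $\rk(M_1+\lambda M_2)\ge s+t>r$, contradicting $M_1+\lambda M_2\in\calV$. Hence $s+t\le r$.

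The part I expect to be the real obstacle is this counting, rather than the structural ingredients. Bounding the bad scalars crudely by the degrees of the minors gives roughly $s$ for the $A$-block and $t$ for the $B$-block, hence about $s+t$ exceptional values, which is worthless once $s+t$ reaches $\#\K$; the argument only works because the ranks of $A(M_2)$ and $B(M_1)$ are controlled through $\rk M_i\le r$, which pulls the count down to $r$, and this is exactly where the hypothesis $r<\#\K$ is used. A smaller technical nuisance is that the reference minor must be invertible at the base point of the line, which is why $M_1$ and $M_2$ play asymmetric roles and why the value $\lambda=0$ has to be added by hand for the $B$-block.
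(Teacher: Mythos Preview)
The paper does not prove this lemma; it is quoted verbatim as Lemma~2.4 of \cite{dSPLLD1} and used as a black box. So there is no ``paper's own proof'' to compare against here.

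That said, your argument is correct and is essentially the standard proof of this result. The block inequality $\rk M\ge \rk A+\rk B$ is right for the reason you give; your degree bound on $\det(P+\lambda Q)$ is justified (expanding by multilinearity in the columns, the $\lambda^j$-term involves $j$ columns of $Q$, which are linearly dependent once $j>\rk Q$); and the bookkeeping at the end is clean: the contradiction hypothesis $s+t\ge r+1$ is exactly what pushes the number of excluded scalars down to at most $r<\#\K$. The asymmetry you flag (handling $\lambda=0$ separately for the $B$-block) is intrinsic to the choice of parametrization and harmless. Nothing needs fixing.
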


\begin{lemma}\label{eigenvectorslemma}
Assume that $\# \K \geq 3$. Let $V$ be a vector space over $\K$ and $u$ be an endomorphism of $V$
that is not a scalar multiple of the identity. Then, there are two linearly independent non-eigenvectors of $u$.
\end{lemma}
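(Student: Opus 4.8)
The plan is to argue by contradiction, assuming that every two non-eigenvectors of $u$ are linearly dependent; equivalently, the set $N$ of non-eigenvectors of $u$ is contained in a single line $\K v_0$ (together with the zero vector, which is not in $N$ but lies on that line). First I would dispose of the degenerate possibility that $N$ is empty: if every nonzero vector of $V$ is an eigenvector of $u$, then a standard argument shows $u$ is a scalar multiple of the identity — indeed, for linearly independent $x,y$ with $u x=\lambda x$ and $u y=\mu y$, applying $u$ to $x+y$ forces $\lambda=\mu$, and the one-dimensional case is trivial; this contradicts our hypothesis. So $N\neq\emptyset$, and we may fix a non-eigenvector $v_0$ with $N\subset \K v_0$.

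Next I would exploit the fact that the eigenvectors, by contrast, are very abundant: every vector not on the line $\K v_0$ is an eigenvector of $u$. Pick any vector $w$ outside $\K v_0$ (such a $w$ exists since $v_0$ itself is not an eigenvector, so $V$ is not equal to $\K v_0$). Then $w$ is an eigenvector, say $u w=\mu w$. For any other vector $w'$ outside $\K v_0$ and outside $\K w$, both $w'$ and $w+w'$ (if the latter is also outside $\K v_0$) are eigenvectors, and the usual computation forces the eigenvalue of $w'$ to equal $\mu$. The role of the hypothesis $\# \K\geq 3$ is precisely here: with at least three elements in $\K$, given any two vectors $w,w'$ there is a scalar $t\in\K^{*}$ for which $w+t w'$ avoids the single bad line $\K v_0$ (there are at least two nonzero scalars $t$, and at most one of them can place $w+tw'$ on $\K v_0$), so one can always run the comparison argument and conclude that all eigenvectors lying outside $\K v_0$ share the same eigenvalue $\mu$.

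Consequently $u$ acts as $\mu\cdot\id$ on every vector outside the line $\K v_0$, hence on a spanning set of $V$ (the complement of a proper line spans, again using $\# \K\geq 3$ or simply that $V\neq \K v_0$), so $u=\mu\,\id_V$. This contradicts the assumption that $u$ is not a scalar multiple of the identity, and the proof is complete. The only subtle point — and the one I would treat most carefully — is the repeated use of $\# \K\geq 3$ to guarantee that, when comparing the eigenvalues of two eigenvectors, an auxiliary combination $w+t w'$ can be chosen off the forbidden line $\K v_0$; over $\mathbb{F}_2$ this freedom disappears, which is exactly why the hypothesis is needed.
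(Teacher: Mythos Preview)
Your argument is correct, but it differs from the paper's. The paper gives a short constructive proof: pick one non-eigenvector $x$, set $P:=\Vect(x,u(x))$, and observe that at most two of the $\#\K+1\geq 4$ one-dimensional subspaces of $P$ can be eigendirections of $u$ (three pairwise independent eigenvectors in a $2$-plane would force a common eigenvalue and hence make all of $P$ an eigenspace, contradicting $x\in P$); the remaining lines furnish two linearly independent non-eigenvectors. Your route is a global contradiction: if all non-eigenvectors lie on a single line $\K v_0$, you show, via the eigenvalue-comparison trick with $w+tw'$, that every vector off that line is an eigenvector for one common eigenvalue $\mu$, whence $u=\mu\,\id$. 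Both uses of $\#\K\geq 3$ are genuine (the paper needs at least four lines in $P$; you need at least two nonzero scalars $t$ so that one avoids $\K v_0$). The paper's approach is shorter and immediately locates the two non-eigenvectors inside an explicit $2$-dimensional subspace; yours is a bit longer but has the pleasant feature of recovering, along the way, the familiar fact that ``all nonzero vectors eigenvectors $\Rightarrow$ scalar'' in its slightly sharpened form ``all nonzero vectors off one line are eigenvectors $\Rightarrow$ scalar''. One small stylistic remark: the spanning claim at the end does not require $\#\K\geq 3$ (you note this yourself); writing $v_0=(v_0+w)-w$ with $w\notin\K v_0$ suffices over any field.
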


\begin{proof}[Proof of Lemma \ref{eigenvectorslemma}]
As $u$ is not a scalar multiple of the identity, some vector $x \in V \setminus \{0\}$
is not an eigenvector of $u$. Then, the $2$-dimensional subspace $P:=\Vect(x,u(x))$ contains $x$.
As $u_{|P}$ is not a scalar multiple of the identity, $u$ stabilizes at most two
$1$-dimensional subspaces of $P$. As $\# \K>2$, there are at least four $1$-dimensional subspaces of $P$, whence at least two of them are not stable under $u$. This proves our claim.
\end{proof}

Now, we are ready to prove Lemma \ref{reductionlemma}.

\begin{proof}[Proof of Lemma \ref{reductionlemma}]
Throughout the proof, we assume that $A \not\in [\calH,\calH]$ and that there is no scalar
$\lambda$ such that $A$ is similar to $\lambda I_3+E_{2,3}$. Our aim is to show that
$(I_n,A,B)$ is LLD.

Note that, for all $P \in \GL_n(\K)$, no pair $(M,N)\in \Mat_n(\K)^2$ satisfies both
$[M,N]=P^{-1}AP$ and $\tr((P^{-1}BP) M)=\tr((P^{-1}BP) N)=0$.

Let us say that a vector $x \in \K^n$
has \textbf{order $3$} when $\rk(x,Ax,A^2x)=3$.
Let $x \in \K^n$ be of order $3$. Then, $(x,Ax,A^2x)$ may be extended into a basis
$\bfB=(x_1,x_2,x_3,x_4,\dots,x_n)$ of $\K^n$
such that $A':=P_\bfB^{-1}AP_\bfB$ is Hessenberg\footnote{One finds such a basis by induction as follows:
one sets $(x_1,x_2,x_3):=(x,Ax,A^2x)$ and, given $k \in \lcro 4,n\rcro$ such that $x_1,\dots,x_{k-1}$ are defined,
one sets $x_k:=Ax_{k-1}$ if $Ax_{k-1}\not\in \Vect(x_1,\dots,x_{k-1})$, otherwise one chooses an arbitrary vector
$x_k \in \K^n \setminus \Vect(x_1,\dots,x_{k-1})$.}. Moreover, one sees that $\{1,2\} \subset \ell(A')$.
Applying point (a) of Lemma \ref{prelimlemma}, one obtains that the entries in the first column of
$P^{-1}_\bfB\,B\,P_\bfB$ are all zero starting from the third one, which means that $Bx \in \Vect(x,Ax)$.

Let now $x \in \K^n$ be a vector that is not of order $3$. If $x$ and $Ax$ are linearly dependent, then
$x$, $Ax$, $Bx$ are linearly dependent. Thus, we may assume that $\rk(x,Ax)=2$ and $A^2 x \in \Vect(x,Ax)$.
We split $\K^n=\Vect(x,Ax)\oplus F$ and we choose a basis $(f_3,\dots,f_n)$ of $F$.
For $\bfB:=(x,Ax,f_3,\dots,f_n)$, we now have, for some $(\alpha,\beta)\in \K^2$ and some $N \in \Mat_{n-2}(\K)$,
$$P_{\bfB}^{-1}\,A\,P_\bfB=\begin{bmatrix}
K & [?]_{2 \times (n-2)} \\
[0]_{(n-2) \times 2} &  N
\end{bmatrix} \quad \text{where
$K=\begin{bmatrix}
0 & \alpha \\
1 & \beta
\end{bmatrix}$.}$$
From there, we split the discussion into several cases, depending on the form of $N$ and its relationship with $K$.

\noindent \textbf{Case 1.} $N \not\in \K I_{n-2}$. \\
Then, there is a vector $y \in \K^{n-2}$ for which $y$ and $Ny$ are linearly independent.
Denoting by $z$ the vector of $F$ with coordinate list $y$ in $(f_3,\dots,f_n)$,
one obtains $\rk(x,Ax,z,Az)=4$, and hence one may extend $(x,Ax,z,Az)$
into a basis $\bfB'$ of $\K^n$ such that $A':=P_{\bfB'}^{-1}AP_{\bfB'}$ is Hessenberg
with $\{1,3\} \subset \ell(A')$. Point (b) of Lemma \ref{prelimlemma} shows that, in the first column of
$P_{\bfB'}^{-1}BP_{\bfB'}$, all the entries must be zero starting from the fourth one,
yielding $Bx \in \Vect(x,Ax,z)$.
As $N \not\in \K I_{n-2}$, we know from Lemma \ref{eigenvectorslemma} that we may find another vector $z' \in F \setminus \K z$
such that $\rk(x,Ax,z',Az')=4$, which yields $Bx \in \Vect(x,Ax,z')$.
Thus, $Bx \in \Vect(x,Ax,z) \cap \Vect(x,Ax,z')=\Vect(x,Ax)$.

\vskip 3mm
\noindent \textbf{Case 2.} $N=\lambda\,I_{n-2}$ for some $\lambda \in \K$. \\
\textbf{Subcase 2.1.} $\lambda$ is not an eigenvalue of $K$. \\
Then, $G:=\Ker(A-\lambda I_n)$ has dimension $n-2$.
For $z \in \K^n$, denote by $p_z$ the monic generator of the ideal $\{q \in \K[t] : \; q(A)z=0\}$.
Recall that, given $y$ and $z$ in $\K^n$ for which $p_y$ and $p_z$ are mutually prime,
one has $p_{y+z}=p_y p_z$. In particular, as $p_x$ has degree $2$,
$p_z$ has degree $3$ for every $z \in (\K x\oplus G) \setminus (\K x \cup G)$, that is every
$z$ in $(\K x\oplus G) \setminus (\K x \cup G)$ has order $3$; thus,
$\rk(z,Az,Bz) \leq 2$ for all such $z$. Moreover, it is obvious that $\rk(z,Az,Bz) \leq 2$ for all $z \in G$.

Let us choose a non-zero linear form $\varphi$ on $\K x\oplus G$ such that $\varphi(x)=0$. For every $z \in \K x\oplus G$, set
$$M(z)=\begin{bmatrix}
\varphi(z) & 0 & 0 & 0 \\
[0]_{n \times 1} & z & Az & Bz
\end{bmatrix} \in \Mat_{n+1,4}(\K).$$
Then, with the above results, we know that $\rk M(z) \leq 3$ for all $z \in \K x\oplus G$.
On the other hand, $\max \{\rk \varphi(z) \mid z \in (\K x\oplus G)\}=1$.
Using Lemma \ref{decompositionlemma}, we deduce that
$\rk(z,Az,Bz) \leq 2$ for all $z \in \K x\oplus G$. In particular, $\rk (x,Ax,Bx) \leq 2$.

\vskip 2mm
\noindent \textbf{Subcase 2.2.} $\lambda$ is an eigenvalue of $K$ with multiplicity $1$. \\
Then, there are eigenvectors $y$ and $z$ of $A$, with distinct corresponding eigenvalues, such that $x=y+z$.
Thus, $(y,z)$ may be extended into a basis $\bfB'$ of $\K^n$ such that $P_{\bfB'}^{-1} A P_{\bfB'}$ is upper-triangular.
It follows from point (a) of Lemma \ref{prelimlemma} that $P_{\bfB'}^{-1} B P_{\bfB'}$ is Hessenberg,
and in particular $By \in \Vect(y,z)$. Starting from $(z,y)$ instead of $(y,z)$, one finds $Bz \in \Vect(y,z)$.
Therefore, all the vectors $y+z$, $A(y+z)$ and $B(y+z)$ belong to the $2$-dimensional space $\Vect(y,z)$, which yields
$\rk (x,Ax,Bx) \leq 2$.

\vskip 2mm
\noindent \textbf{Subcase 2.3.} $\lambda$ is an eigenvalue of $K$ with multiplicity $2$ . \\
Then, the characteristic polynomial of $A$ is $(t-\lambda)^n$.
\begin{itemize}
\item Assume that $n \geq 4$. One chooses an eigenvector $y$ of $A$ in $\Vect(x,Ax)$, so that $(y,x)$ is a basis of $\Vect(x,Ax)$.
Then, one chooses an arbitrary non-zero vector $u \in F$, and one extends $(y,x,u)$ into a basis $\bfB'$
of $\K^n$ such that $P_{\bfB'}^{-1} A P_{\bfB'}$ is upper-triangular.
Applying point (a) of Lemma \ref{prelimlemma} once more yields $Bx \in \Vect(y,x,u)=\Vect(x,Ax,u)$.
As $n \geq 4$, we can choose another vector $v \in F \setminus \K u$,
and the above method yields $Bx \in \Vect(x,Ax,v)$, while $x,Ax,u,v$ are linearly independent.
Therefore, $Bx \in \Vect(x,Ax,u) \cap  \Vect(x,Ax,v)=\Vect(x,Ax)$.

\item Finally, assume that $n=3$. As $A$ is not similar to $\lambda I_3+E_{2,3}$,
the only remaining option is that $\rk(A-\lambda I_3)=2$.
Then, we can find a linear form $\varphi$ on $\K^3$ with kernel $\Ker (A-\lambda I_3)^2$.
Every vector $z \in \K^3 \setminus  \Ker (A-\lambda I_3)^2$ has order $3$.
Therefore, for every $z \in \K^3$, either $\varphi(z)=0$ or $\rk (z,Az,Bz)\leq 2$.
With the same line of reasoning as in Subcase 2.1, we obtain $\rk (x,Ax,Bx)\leq 2$.
This completes the proof.
\end{itemize}
\end{proof}

Thus, only two situations are left to consider: the one where $(I_n,A,B)$ is LLD,
and the one where $A$ is similar to $\lambda I_3+E_{2,3}$ for some $\lambda \in \K$.
They are dealt with separately in the next two sections.

\subsection{The case when $(I_n,A,B)$ is locally linearly dependent}

In order to analyze the situation where $(I_n,A,B)$ is LLD, we use the classification of LLD
triples over fields with more than $2$ elements
(this result is found in \cite{dSPLLD2}; prior to that, the result was known for
infinite fields \cite{BresarSemrl} and for fields with more than $4$ elements \cite{ChebotarSemrl}).

\begin{theo}[Classification theorem for LLD triples]\label{LLDclass}
Let $(f,g,h)$ be an LLD triple of linear operators from a vector space $U$ to a vector space $V$, where the underlying field has more than $2$ elements.
Assume that $f,g,h$ are linearly independent and that $\Ker f \cap \Ker g \cap \Ker h=\{0\}$
and $\im f+\im g+\im h=V$.
Then:
\begin{enumerate}[(a)]
\item Either there is a $2$-dimensional subspace $\calP$ of $\Vect(f,g,h)$ and a $1$-dimensional subspace $\calD$
of $V$ such that $\im u \subset \calD$ for all $u \in \calP$;
\item Or $\dim V \leq 2$;
\item Or $\dim U=\dim V=3$ and there are bases of $U$ and $V$ in which the operator space $\Vect(f,g,h)$ is represented by the space
$\Mata_3(\K)$ of all $3 \times 3$ alternating matrices.
\end{enumerate}
\end{theo}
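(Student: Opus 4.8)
The plan is to recast the hypothesis as a statement about a space of matrices of bounded rank, and then to invoke the classification of such spaces. Fix the basis $(f,g,h)$ of $\calW:=\Vect(f,g,h)$, put $d:=\dim V$, and consider the linear map
$$\Theta : x \in U \longmapsto \bigl[\,f(x)\mid g(x)\mid h(x)\,\bigr] \in \Mat_{d,3}(\K)$$
(if $\dim V=\infty$ read $\operatorname{Hom}(\K^3,V)$ instead). Since $\Ker f\cap\Ker g\cap\Ker h=\{0\}$, the map $\Theta$ is injective, so $\calN:=\im\Theta$ is a linear subspace isomorphic to $U$, and one has the dictionary:
\begin{itemize}
\item $(f,g,h)$ is LLD $\iff$ every matrix of $\calN$ has rank at most $2$;
\item $f,g,h$ are linearly independent $\iff$ no nonzero linear combination of the three columns vanishes identically on $\calN$;
\item $\im f+\im g+\im h=V$ $\iff$ the column spaces of the matrices of $\calN$ span $V$.
\end{itemize}
Thus $\calN$ is a space of matrices of rank $\le 2$ that is ``full'' in both the column and the row direction.

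The next step is to apply the classification of linear subspaces of matrices of rank $\le 2$, up to the equivalence $\calN\mapsto P\calN Q$ --- which, on the operator side, amounts to changing the basis of $V$ through $P$ and the basis of $\calW$ through $Q$, while changing the basis of $U$ leaves $\calN$ unchanged. Up to this equivalence, such a space is either \emph{reducible} (block upper-triangular with a genuine zero block; as the rank bound is $2$, the diagonal blocks have rank $\le 1$, and rank $\le 1$ spaces are themselves fully described, each being a ``fixed image line'' block or a ``fixed kernel hyperplane'' block), or it is \emph{primitive}, in which case it is equivalent to the space $\Mata_3(\K)$ of alternating $3\times 3$ matrices --- which forces the format to be $3\times 3$. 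The threshold under which this classification holds without exotic extra cases is exactly $\#\K>2$: it is the same hypothesis $r<\#\K$, with $r=2$, that already underlies Lemma \ref{decompositionlemma}, and over $\F_2$ the theorem genuinely fails.

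The third step is the translation back to the operator side, case by case: if $\calN$ is row-compressible (all its matrices supported on a fixed $2$-element set of rows), then $\im f+\im g+\im h$ lies in a $2$-dimensional subspace of $V$, hence $\dim V\le 2$ --- conclusion (b); if $\calN$ is column-compressible (supported on $\le 2$ of the three columns), then some nonzero combination of $f,g,h$ vanishes, against linear independence, so this does not occur; in the remaining, ``mixed'', reducible shape, unwinding the block structure and using the rank $\le 1$ description of the diagonal blocks produces a $2$-dimensional subspace $\calP$ of $\calW$ whose members all have image contained in a fixed line $\calD\subset V$ --- conclusion (a) --- the sub-branches in which this would force a column of $\calN$ to vanish being ruled out by column-fullness; and if $\calN$ is primitive it is equivalent to $\Mata_3(\K)$, so tracing the equivalence back and taking the matching basis of $U$ gives $\dim U=\dim V=3$ and realizes $\Vect(f,g,h)$ as $\Mata_3(\K)$ in suitable bases --- conclusion (c). This analysis also settles the passage to finite dimensions: if $\dim V$ is infinite one is necessarily in the reducible case (the primitive case pins the format to $3\times 3$), and the reducible sub-cases yield (a) or (b) without change; alternatively one first restricts $U$ to a finite-dimensional subspace realizing all the relevant images, and then divides by the common kernel.

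The step I expect to be the real obstacle is not the dictionary or the translation, but the rank $\le 2$ classification itself, carried out under the sharp hypothesis $\#\K>2$, together with the bookkeeping it entails: one sets up an induction (on the number of rows, say) in which a rank $\le 1$ diagonal block is split off a reducible space and the three conclusions are reassembled, all the while tracking the two fullness conditions so that the exclusion of the column-compressible branch is legitimate and the primitive branch is correctly pinned down to $\Mata_3(\K)$; the role of $\#\K>2$ is precisely to prevent, via the $r<\#\K$ mechanism of Lemma \ref{decompositionlemma} and via the fact that a plane carries more than three lines, the appearance of further primitive spaces. A more hands-on, triple-specific alternative --- analyzing the map $x\mapsto[\alpha_x:\beta_x:\gamma_x]\in\Pgros^2(\K)$ recording the generically unique relation $\alpha_x f(x)+\beta_x g(x)+\gamma_x h(x)=0$, and classifying its fibres and its image --- reaches the same trichotomy at a comparable cost.
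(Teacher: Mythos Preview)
The paper does not contain a proof of this theorem at all: it is quoted from the literature, with the attribution ``this result is found in \cite{dSPLLD2}; prior to that, the result was known for infinite fields \cite{BresarSemrl} and for fields with more than $4$ elements \cite{ChebotarSemrl}.'' There is therefore no in-paper argument to compare your proposal against.

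As for your sketch itself: the reduction you describe --- encoding the LLD triple as a linear space $\calN\subset\Mat_{d,3}(\K)$ of matrices of rank at most $2$, and then invoking the Atkinson-type classification of such spaces into compression spaces and the primitive $\Mata_3(\K)$ --- is indeed the standard route, and your dictionary between the fullness hypotheses on $(f,g,h)$ and the column/row non-degeneracy of $\calN$ is correct. You are also right that the genuine content lies in the rank $\le 2$ classification under the sharp bound $\#\K>2$, and that this is precisely where the references \cite{dSPLLD2}, \cite{BresarSemrl}, \cite{ChebotarSemrl} differ in strength. What you have written is an accurate roadmap rather than a proof: the inductive splitting-off of rank $\le 1$ blocks, the exclusion of extra primitive shapes over small fields, and the bookkeeping of the two fullness conditions are all acknowledged but not carried out. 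Since the paper treats the theorem as a black box, this level of detail is already more than the paper supplies; but if you intend this as a self-contained proof you would still need to either execute the rank $\le 2$ classification or cite it explicitly.
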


\begin{cor}\label{LLDavecid}
Assume that $\# \K>2$, and let $A$ and $B$ be matrices of $\Mat_n(\K)$, with $n \geq 3$, such that
$(I_n,A,B)$ is LLD. Then, either $I_n,A,B$ are linearly dependent, or
there is a $1$-dimensional subspace $\calD$ of $\K^n$ and scalars $\lambda$ and $\mu$ such that
$\im(A-\lambda I_n)=\calD=\im(B-\mu I_n)$.
\end{cor}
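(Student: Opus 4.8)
The plan is to apply the classification theorem for LLD triples (Theorem~\ref{LLDclass}) to the triple of linear operators canonically attached to $(I_n,A,B)$, and then to rule out all but one of the three resulting possibilities.

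First I would dispose of the trivial alternative: if $I_n,A,B$ are linearly dependent there is nothing to prove, so assume they are linearly independent and view them as operators on $\K^n$. Here the key observation is that invertibility of $I_n$ makes the non-degeneracy hypotheses of Theorem~\ref{LLDclass} automatic: $\Ker I_n=\{0\}$ forces the common kernel to be $\{0\}$, and $\im I_n=\K^n$ forces the sum of the images to be all of $V=\K^n$. So Theorem~\ref{LLDclass} applies directly.

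Next I would eliminate cases (b) and (c) of that theorem. Case (b) asserts $\dim V\leq 2$, i.e. $n\leq 2$, which is excluded by hypothesis. Case (c) would force $n=3$ together with an equivalence of operator spaces identifying $\Vect(I_3,A,B)$ with $\Mata_3(\K)$, i.e. $P\,\Vect(I_3,A,B)\,Q=\Mata_3(\K)$ for some invertible $P,Q$; since such an equivalence preserves the set of ranks attained inside the space, and $\Vect(I_3,A,B)$ contains $I_3$, which has rank $3$, whereas every $3\times 3$ alternating matrix has rank at most $2$, this is impossible. Hence only case (a) can occur: there is a $2$-dimensional subspace $\calP$ of $\Vect(I_n,A,B)$ and a line $\calD\subseteq\K^n$ with $\im u\subseteq\calD$ for every $u\in\calP$.

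Finally I would extract the conclusion from case (a). Since $\im I_n=\K^n$ is not contained in a line, $I_n\notin\calP$; as $\calP$ is $2$-dimensional and $\Vect(I_n,A,B)$ is $3$-dimensional, this gives the splitting $\Vect(I_n,A,B)=\K I_n\oplus\calP$. Writing $A=\lambda I_n+u$ and $B=\mu I_n+v$ with $u,v\in\calP$, we get $\im(A-\lambda I_n)\subseteq\calD$ and $\im(B-\mu I_n)\subseteq\calD$; moreover $u\neq 0$ and $v\neq 0$, for otherwise $A$ or $B$ would be a scalar multiple of $I_n$, contradicting the linear independence of $I_n,A,B$. A nonzero operator whose image is contained in a line has image equal to that line, whence $\im(A-\lambda I_n)=\calD=\im(B-\mu I_n)$, as desired. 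The only step demanding real care is the elimination of case (c), where one must invoke the fact that $3\times 3$ alternating matrices have rank at most $2$ and that equivalence of operator spaces preserves the ranks of its elements; everything else is routine linear algebra around the decomposition $\Vect(I_n,A,B)=\K I_n\oplus\calP$.
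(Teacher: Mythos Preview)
Your proof is correct and follows essentially the same route as the paper's own argument: apply Theorem~\ref{LLDclass}, use the invertibility of $I_n$ to verify the non-degeneracy hypotheses and to discard cases (b) and (c), then read off the conclusion from the splitting $\Vect(I_n,A,B)=\K I_n\oplus\calP$. The only cosmetic difference is that you spell out the rank-preservation argument for case (c) a bit more explicitly than the paper, which simply notes that no $3\times 3$ alternating matrix is invertible.
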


\begin{proof}
Assume that $I_n,A,B$ are linearly independent.
As $\Ker I_n=\{0\}$ and $\im I_n=\K^n$, we are in the position to use Theorem \ref{LLDclass}.
Moreover, $\rk I_n>2$ discards Cases (b) and (c) altogether (as no $3 \times 3$ alternating
matrix is invertible).
Therefore, we have a $2$-dimensional subspace $\calP$ of $\Vect(I_n,A,B)$ and a $1$-dimensional subspace $\calD$ of $\K^n$
such that $\im M \subset \calD$ for all $M \in \calP$. In particular $I_n \not\in \calP$, whence $\Vect(I_n,A,B)=\K I_n \oplus \calP$.
This yields a pair $(\lambda,M_1)\in \K \times \calP$ such that $A=\lambda I_n+M_1$, and hence
$\im(A-\lambda I_n) \subset \calD$. As $A-\lambda I_n \neq 0$ (we have assumed that $I_n,A,B$ are linearly independent),
we deduce that $\im(A-\lambda I_n) =\calD$. Similarly, one finds a scalar $\mu$ such that $\im(B-\mu I_n) =\calD$.
\end{proof}

From there, we can prove the following result as a consequence of Theorem \ref{hypercan}:

\begin{lemma}\label{LLDcase}
Assume that $\# \K >3$ and $n \geq 3$.
Let $(A,B) \in \frak{sl}_n(\K)^2$ be with $B \neq 0$, and set $\calH:=\{B\}^\bot$.
Assume that $(I_n,A,B)$ is LLD and that $A$ is not similar to $\lambda I_3+E_{2,3}$ for some $\lambda \in \K$.
Then, $A \in [\calH,\calH]$.
\end{lemma}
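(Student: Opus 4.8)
The plan is to split on the position of $B$ relative to $\Vect(I_n,A)$ and to invoke Corollary~\ref{LLDavecid} only in the generic case. First I would dispose of the case $B \in \Vect(I_n,A)$ without even using the LLD hypothesis: by Theorem~\ref{hypercan} we have $A=[A_1,A_2]$ for some $A_1,A_2 \in \mathfrak{sl}_n(\K)$, and since $A=\ad_{A_1}(A_2)=-\ad_{A_2}(A_1)$, Lemma~\ref{imad} gives $\tr(AA_1)=\tr(AA_2)=0$; as $\tr(I_nA_1)=\tr(I_nA_2)=0$ too, the matrices $A_1,A_2$ are orthogonal to all of $\Vect(I_n,A)$, in particular to $B$, so $A_1,A_2 \in \calH$ and $A \in [\calH,\calH]$. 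From here on $B \notin \Vect(I_n,A)$.

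Next comes what I expect to be the main obstacle: the case $A \in \K I_n$, say $A=\alpha I_n$ with $n\alpha=\tr A=0$. If $\alpha=0$ then $A=0=[0,0]\in[\calH,\calH]$, so assume $\alpha\neq 0$, which forces $\K$ to have characteristic dividing $n$, i.e.\ $n\cdot 1_\K=0$. Now $B\notin\K I_n$, so $B$ is non-scalar; after conjugating (principle~(3) in Section~\ref{proofstrategy}) --- for instance using a basis whose first vector $x$ satisfies $\rk(x,Bx)=2$ and whose last vector is $Bx$ --- I may assume $b_{n,1}\neq 0$. The idea is then to run principle~(1) with a suitable nilpotent matrix of rank $n-1$: pick $a_1,\dots,a_{n-1}\in\K\setminus\{0\}$, set $c:=-b_{n,1}^{-1}\sum_{k=1}^{n-1}a_k b_{k+1,k}$ and $M:=\sum_{k=1}^{n-1}a_kE_{k,k+1}+c\,E_{1,n}$, so that $\tr(BM)=0$, i.e.\ $M\in\calH$. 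Such an $M$ is cyclic, hence $\calC(M)=\K[M]$; being nilpotent with $n\cdot 1_\K=0$, every element of $\K[M]$ has zero trace, so $\tr(AQ)=\alpha\tr Q=0$ for all $Q\in\calC(M)$ and thus $A\in\im(\ad_M)$ by Lemma~\ref{imad}. Finally $M^{n-1}=(a_1\cdots a_{n-1})E_{1,n}$ gives $\tr(BM^{n-1})=(a_1\cdots a_{n-1})b_{n,1}\neq 0$, so $\calC(M)\not\subset\calH$, and principle~(1) delivers $A\in[\calH,\calH]$. The point of this detour is that Theorem~\ref{hypercan} is useless here unless $\calH=\mathfrak{sl}_n(\K)$.

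It remains to treat $A\notin\K I_n$ together with $B\notin\Vect(I_n,A)$; then $I_n,A,B$ are linearly independent, so Corollary~\ref{LLDavecid} produces a line $\calD=\K v$ of $\K^n$ and scalars $\lambda,\mu$ with $\im(A-\lambda I_n)=\calD=\im(B-\mu I_n)$. I would write $A=\lambda I_n+v\varphi$ and $B=\mu I_n+v\psi$ for nonzero linear forms $\varphi,\psi$; if $\psi\in\K\varphi$ then $B-\mu I_n\in\K(A-\lambda I_n)$, so $B\in\Vect(I_n,A)$, contrary to hypothesis, whence $\varphi,\psi$ are independent. The goal is then to produce a basis $\bfB=(b_1,b_2,v,b_4,\dots,b_n)$ with $b_1\in\Ker\varphi\setminus\Ker\psi$, $b_2\in\Ker\varphi$, and $(b_1,b_2,v)$ free: this is immediate when $v\notin\Ker\varphi$ since $\dim\Ker\varphi=n-1\geq 2$, whereas if $v\in\Ker\varphi$ then $(A-\lambda I_n)^2=\varphi(v)\,v\varphi=0$, so $A$ would be similar to $\lambda I_3+E_{2,3}$ for $n=3$ --- excluded --- forcing $n\geq 4$ and hence $\dim\Ker\varphi\geq 3$, leaving enough room. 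In $\bfB$ the matrix of $A$ is upper-triangular (its off-diagonal entries all sit in row $3$, at columns $\geq 4$) and the matrix of $B$ has $(3,1)$-entry $\psi(b_1)\neq 0$, hence is not Hessenberg, so point~(a) of Lemma~\ref{prelimlemma} applied to this conjugate pair, together with principle~(3) in Section~\ref{proofstrategy}, yields $A\in[\calH,\calH]$.

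This last step is essentially bookkeeping once the right basis is pinned down; the only genuine subtlety there is the exclusion of $n=3$ through the forbidden similarity type, which is exactly why that hypothesis appears in the statement. The truly delicate case is the second one above, $A=\alpha I_n$ with $\alpha\neq 0$ (positive characteristic dividing $n$), where one cannot appeal to Theorem~\ref{hypercan} and must instead build by hand a cyclic nilpotent element of $\calH$ whose $(n-1)$-st power pairs nontrivially with $B$.
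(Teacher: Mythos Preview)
Your proof is correct, and in two of its three cases it takes a genuinely different route from the paper.

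For the case $B\in\Vect(I_n,A)$ you argue exactly as the paper does in its subcase $A\notin\K I_n$. The divergence begins with $A=\alpha I_n$, $\alpha\neq 0$, $B\notin\K I_n$: the paper works by \emph{reductio ad absurdum}, applying point~(a) of Lemma~\ref{prelimlemma} to every basis (since a scalar $A$ is upper-triangular in any basis) to force every conjugate of $B$ to be Hessenberg, whence $B\in\K I_n$, a contradiction. You instead give a direct construction: after conjugating so that $b_{n,1}\neq 0$, you exhibit a rank-$(n-1)$ nilpotent $M\in\calH$ whose $(n-1)$-st power is a nonzero multiple of $E_{1,n}$; the characteristic hypothesis $n\cdot 1_\K=0$ makes every power of $M$ traceless, so $A=\alpha I_n$ lies in $\im(\ad_M)$, and $\tr(BM^{n-1})\neq 0$ triggers principle~(1). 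This is slicker and more explicit than the paper's indirect argument.

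For the case $I_n,A,B$ linearly independent, the paper again argues by contradiction: assuming $A\notin[\calH,\calH]$, it applies Lemma~\ref{prelimlemma}(a) to several bases built from eigenvectors of $A$ to show that every eigenvector of $A$ is an eigenvector of $B$, eventually forcing $\Ker(A-\lambda I_n)=\Ker(B-\mu I_n)$ and hence $B\in\Vect(I_n,A)$. You instead write $A=\lambda I_n+v\varphi$, $B=\mu I_n+v\psi$ and exhibit \emph{one} basis $(b_1,b_2,v,\dots)$ with $b_1,b_2\in\Ker\varphi$ and $\psi(b_1)\neq 0$, in which $A$ is upper-triangular while $B$ has a nonzero $(3,1)$-entry; a single application of Lemma~\ref{prelimlemma}(a) then finishes. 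Your direct approach is shorter and identifies precisely where the excluded similarity type $\lambda I_3+E_{2,3}$ enters (namely $v\in\Ker\varphi$ with $n=3$), whereas the paper's approach perhaps makes clearer \emph{why} the generic case is obstructed only by that single exceptional shape. Both routes rest on the same ingredients (Theorem~\ref{hypercan}, Corollary~\ref{LLDavecid}, Lemma~\ref{prelimlemma}(a), and the three principles), so the difference is one of organization rather than of underlying ideas.
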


\begin{proof}
We use a \emph{reductio ad absurdum} by assuming that $A \not\in [\calH,\calH]$.
By Corollary \ref{LLDavecid}, we can split the discussion into two main cases.

\noindent \textbf{Case 1.} $I_n,A,B$ are linearly dependent. \\
Assume first that $A \in \K I_n$.
Then, $P^{-1}AP$ is upper-triangular for every $P \in \GL_n(\K)$, and hence Lemma \ref{prelimlemma}
yields that $P^{-1}BP$ is Hessenberg for every such $P$.
In particular, let $x \in \K^n \setminus \{0\}$. For every $y \in \K^n \setminus \K x$,
we can extend $(x,y)$ into a basis $(x,y,y_3,\dots,y_n)$ of $\K^n$, and hence we learn that $Bx \in \Vect(x,y)$.
Using the basis $(x,y_3,y,y_4,\dots,y_n)$, we also find $Bx \in \Vect(x,y_3)$, whence $Bx \in \K x$.
Varying $x$, we deduce that $B \in \K I_n$, whence $\calH=\frak{sl}_n(\K)$. Theorem \ref{hypercan} then
yields $A \in [\calH,\calH]$, contradicting our assumptions.

Assume now that $A \not\in \K I_n$. Then, there are scalars $\lambda$ and $\mu$ such that $B=\lambda A+\mu I_n$.
By Theorem \ref{hypercan}, there are trace zero matrices $M$ and $N$ such that $A=[M,N]$.
Thus $\tr((B-\lambda A)M)=\tr((B-\lambda A)N)=0$. Using principle (2) of Section \ref{proofstrategy}, we
deduce that $(M,N) \in \calH^2$, whence $A \in [\calH,\calH]$.

\vskip 2mm
\noindent \textbf{Case 2.} $I_n,A,B$ are linearly independent. \\
By Corollary \ref{LLDavecid}, there are scalars $\lambda$ and $\mu$ together with a $1$-dimensional subspace $\calD$
of $\K^n$ such that $\im(A-\lambda I_n)=\im(B-\mu I_n)=\calD$.
In particular, $A-\lambda I_n$ has rank $1$, and hence it is diagonalisable or nilpotent.
In any case, $A$ is triangularizable; in the second case, the assumption that $A$ is not similar to $\lambda I_3+E_{2,3}$ leads to $n \geq 4$.

Let $x$ be an eigenvector of $A$. Then, we can extend $x$ into a triple $(x,y,z)$ of linearly independent eigenvectors of $A$
(this uses $n \geq 4$ in the case when $A-\lambda I_n$ is nilpotent).
Then, we further extend this triple into a basis $(x,y,z,y_4,\dots,y_n)$ in which $v \mapsto Av$
is upper-triangular. Point (a) in Lemma \ref{prelimlemma} yields $Bx \in \Vect(x,y)$.
With the same line of reasoning, $Bx \in \Vect(x,z)$, and hence $Bx \in \Vect(x,y)\cap \Vect(x,z)=\K x$.
Thus, we have proved that every eigenvector of $A$ is an eigenvector of $B$. In particular,
$\Ker(A-\lambda I_n)$ is stable under $v \mapsto Bv$, and the resulting endomorphism is a scalar multiple of the identity.
This provides us with some $\alpha \in \K$ such that $(B-\alpha I_n)z=0$ for all $z \in \Ker(A-\lambda I_n)$.
In particular, $\alpha$ is an eigenvalue of $B$ with multiplicity at least $n-1$, and since
$\mu$ shares this property and $n<2(n-1)$, we deduce that $\alpha=\mu$. As $\rk(A-\lambda I_n)=\rk(B-\mu I_n)=1$, we
deduce that $\Ker(A-\lambda I_n)=\Ker(B-\mu I_n)$. Thus, $A-\lambda I_n$ and $B-\mu I_n$ are two rank $1$ matrices
with the same kernel and the same range, and hence they are linearly dependent. This contradicts the assumption that $I_n,A,B$
be linearly independent, thereby completing the proof.
\end{proof}

\subsection{The case when $A=\lambda I_3+E_{2,3}$}\label{specialcasesection}

\begin{lemma}\label{finalcase}
Assume that $\# \K>2$. Let $\lambda \in \K$. Assume that
$A:=\lambda I_3+E_{2,3}$
has trace zero. Let $B \in \frak{sl}_3(\K) \setminus \{0\}$, and set
$\calH:=\{B\}^\bot$. Then, $A \in [\calH,\calH]$.
\end{lemma}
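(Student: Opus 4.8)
The plan is to run the Albert--Muckenhoupt mechanism of principle~(1): I look for a \emph{cyclic} matrix $M\in\calH$ with $A\in\im(\ad_M)$ and $\calC(M)\not\subset\calH$, and then conclude exactly as in principle~(1). Note that $A-\lambda I_3=E_{2,3}$ squares to zero, so $A$ has a minimal polynomial of degree $2$ and is not cyclic; this is precisely why the Jordan nilpotent matrix of the earlier sections is no longer available and why this case is peeled off. Write $M=(m_{i,j})$. Since $M$ is cyclic, $\calC(M)=\Vect(I_3,M,M^2)$, so — using $\tr(BI_3)=\tr B=0$ and, once $M\in\calH$, $\tr(BM)=0$ — the condition $\calC(M)\not\subset\calH$ becomes simply $\tr(BM^2)\neq 0$. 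Also, by Lemma~\ref{imadcyclic} and $\tr A=0$, the condition $A\in\im(\ad_M)$ is equivalent to $\tr(AM)=\tr(AM^2)=0$; and since $\tr(E_{2,3}X)=x_{3,2}$ for every $X=(x_{i,j})$, this reads $m_{3,2}=-\lambda\,\tr M$ and $(M^2)_{3,2}=-\lambda\,\tr(M^2)$. Thus the lemma reduces to producing, for the given $B$, a cyclic $M$ meeting these two equations together with $\tr(BM)=0$ and $\tr(BM^2)\neq 0$.

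I would feed this with a short list of explicit families on which the two ``$A\in\im(\ad_M)$'' equations are automatic or cut out a large subfamily. The main one is the space of matrices $M$ that are block upper-triangular for the partition $\{1,2\}\sqcup\{3\}$ (that is $m_{3,1}=m_{3,2}=0$): then $m_{3,2}=0$ and $(M^2)_{3,2}=m_{3,1}m_{1,2}+m_{3,2}(m_{2,2}+m_{3,3})=0$ automatically, so the two equations reduce to $\lambda\,\tr M=0$ and $\lambda\,\tr(M^2)=0$ — vacuous if $\lambda=0$, and (writing $M=\bigl[\begin{smallmatrix}P& v\\ 0& d\end{smallmatrix}\bigr]$ with $P$ of size $2$) equivalent to $d=-\tr P$ and $(\tr P)^2=\det P$ when $\operatorname{char}\K=3$ and $\lambda\neq 0$, still leaving a several-parameter family. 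A second handy family is $\{M:\ m_{1,2}=m_{3,2}=0\}$, which works in the same way. On each family, imposing $M\in\calH$ is one extra linear condition, and one then checks that the remaining free parameters can be chosen so that $M$ is cyclic and $\tr(BM^2)\neq 0$; this succeeds unless $B$ has a very restricted shape (typically many vanishing entries), and for such $B$ one switches to another family from the list or first moves $B$ into convenient position using principle~(3) — recalling that conjugation turns $A$ into $\lambda I_3+N$ for a rank-one square-zero $N$, i.e. keeps us inside the present case.

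Two remarks keep the bookkeeping under control. First, the degenerate case $B\in\K A$, i.e. $\calH=\{A\}^\bot$, is immediate: for \emph{any} $M$ and $N_0$ with $[M,N_0]=A$ one has $\tr(AM)=0$ and $\tr(AN_0)=\tr(MN_0^2)-\tr(N_0MN_0)=0$, so choosing $M$ to be any cyclic matrix with $A\in\im(\ad_M)$ (which exists — take $M$ block upper-triangular as above) forces $M,N_0\in\{A\}^\bot=\calH$. Second, $A^2=2\lambda A-\lambda^2 I_3$ lies in $\Vect(I_3,A)$, which trims several of the trace computations.

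The step I expect to be the genuine obstacle is the verification in the middle of the second paragraph: since $\#\K$ may be as small as $3$, the non-vanishing of $\tr(BM^2)$ on the constrained family cannot be obtained by a genericity argument and has to be checked by hand. This forces a case split according to the vanishing pattern of the entries of $B$ (after a preliminary normalization of $B$ by principle~(3)) and, in each case, a deliberate choice of which explicit cyclic family to use and which of its parameters to tune. Everything else — the reductions via principles (1) and (3), the commutator identities, and the cyclicity checks — is routine.
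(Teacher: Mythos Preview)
Your plan uses the same mechanism as the paper --- principle~(1) with explicit cyclic matrices $M$, checking $\tr(AM)=\tr(AM^2)=0$, $\tr(BM)=0$, $\tr(BM^2)\neq 0$ --- and your reductions are correct. What is missing is the structural step that makes the case analysis tractable. The paper argues by contradiction and, \emph{before} choosing any cyclic matrix, repeatedly applies point~(a) of Lemma~\ref{prelimlemma}: since $A$ is upper-triangular in every basis that starts with two vectors of $\Vect(e_1,e_2)$ (and also in $(e_2,e_3,e_1)$), the hypothesis $A\notin[\calH,\calH]$ forces $B$ to stabilise $\Vect(e_1,e_2)$ and to kill $e_2$ modulo $\K e_2$. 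This pins $B$ down to a five-parameter shape, and only then are explicit $C$'s produced. Your plan reverses the order: you attack an arbitrary $B$ with two fixed block-triangular families and push the casework to the end.

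The concrete problem is that your two families can fail simultaneously on very simple $B$. Take $\lambda=0$ and $B=E_{1,3}$: in your first family $m_{3,1}=m_{3,2}=0$ forces $(M^2)_{3,1}=0$, so $\tr(BM^2)=(M^2)_{3,1}=0$ identically; in your second family the condition $\tr(BM)=m_{3,1}=0$ again yields $(M^2)_{3,1}=0$. This particular $B$ is of course trivial to dispatch directly (e.g.\ $[E_{2,1},E_{1,3}]=E_{2,3}$ with both factors in $\calH$), but the point is that without a device like Lemma~\ref{prelimlemma} to constrain $B$ up front, you have no finite list of families that provably exhausts all $B$, and your ``checked by hand'' step is open-ended rather than a bounded verification. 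Note also that the paper's $\lambda=0$ branch does not go through a cyclic $M$ at all: once $B$ is forced to be upper-triangular, the bare identity $[E_{1,2},E_{2,3}]=E_{1,3}$ (after one basis change) already lands in $\calH\times\calH$.
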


\begin{proof}
We assume that $A \not\in [\calH,\calH]$ and search for a contradiction.
By point (a) in Lemma \ref{prelimlemma}, for every basis $\bfB=(x,y,z)$ of $\K^3$
for which $P_\bfB^{-1}\,A\,P_\bfB$ is upper-triangular, we find $Bx \in \Vect(x,y)$.
In particular, for every basis $(x,y)$ of $\Vect(e_1,e_2)$, the triple $(x,y,e_3)$ qualifies,
whence $Bx \in \Vect(x,y)=\Vect(e_1,e_2)$. It follows that $\Vect(e_1,e_2)$ is stable under $B$.
As $z \mapsto Az$ is also represented by an upper-triangular matrix in the basis $(e_2,e_3,e_1)$, one finds $Be_2 \in \Vect(e_2,e_3)$, whence
$Be_2 \in \K e_2$.
Thus, $B$ has the following shape:
$$B=\begin{bmatrix}
a & 0 & d \\
b & c & e \\
0 & 0 & f
\end{bmatrix}.$$
From there, we split the discussion into two main cases.

\noindent \textbf{Case 1.} $\lambda = 0$. \\
Using $(e_2,e_1,e_3)$ as our new basis, we are reduced to the case when
$$A=\begin{bmatrix}
0 & 0 & 1 \\
0 & 0 & 0 \\
0 & 0 & 0
\end{bmatrix} \quad \text{and} \quad
B=\begin{bmatrix}
? & ? & ? \\
0 & ? & ? \\
0 & 0 & ?
\end{bmatrix}.$$
Then, one checks that $[J_2,E_{2,3}]=A$, and $\tr(J_2B)=0=\tr(E_{2,3}B)$.
This yields $A \in [\calH,\calH]$, contradicting our assumptions.

\vskip 2mm
\noindent \textbf{Case 2.} $\lambda \neq 0$. \\
As we can replace $A$ with $\lambda^{-1}A$, which is similar to $I_3+E_{2,3}$, no generality is lost in assuming that $\lambda=1$.
According to principle (2) of Section \ref{proofstrategy}, no further generality is lost
in subtracting a scalar multiple of $A$ from $B$, to the effect that we may assume that $f=0$
and $B \neq 0$ (if $B$ is a scalar multiple of $A$, then the same principle combined with the Albert-Muckenhoupt theorem
shows that $A \in [\calH,\calH]$). As $\tr B=0$, we find that
$$B=\begin{bmatrix}
a & 0 & d \\
b & -a & e \\
0 & 0 & 0
\end{bmatrix}.$$
Note finally that $\K$ must have characteristic $3$ since $\tr A=0$.

\noindent \textbf{Subcase 2.1.} $b \neq 0$. \\
As the problem is unchanged in multiplying $B$ with a non-zero scalar, we can assume that $b=1$.
Assume furthermore that $d \neq 0$.
Let $(\alpha,\beta)\in \K^2$, and set
$$C:=\begin{bmatrix}
0 & 1 & 0 \\
\alpha & 0 & 1 \\
\beta & 0 & 0
\end{bmatrix}.$$
Note that $C$ is a cyclic matrix and
$$C^2=\begin{bmatrix}
\alpha & 0 & 1 \\
\beta & \alpha & 0 \\
0 & \beta & 0
\end{bmatrix}.$$
Thus, $\tr(AC)=0$, $\tr(BC)=\beta d+1$, $\tr(AC^2)=2\alpha+\beta=\beta-\alpha$
and $\tr(BC^2)=e \beta$. As $d \neq 0$, we can set
$\beta:=-d^{-1}$ and $\alpha:=\beta$, so that $\beta\neq 0$ and
 $\tr(A)=\tr(AC)=\tr(AC^2)=0$. Thus, $A \in \im (\ad_C)$ by Lemma \ref{imadcyclic}, and on the other hand $C \in \calH$. As $A \not\in [\calH,\calH]$, it follows that $\calC(C) \subset \calH$, and hence $\tr(BC^2)=0$. As $\beta \neq 0$, this yields $e=0$.

From there, we can find a non-zero scalar $t$ such that $d+t\,a \neq 0$ (because $\# \K>2$).
In the basis $(e_1,e_2,e_3+t\,e_1)$, the respective matrices of $z \mapsto Az$ and $z \mapsto Bz$ are
$I_3+E_{2,3}$ and
$$\begin{bmatrix}
a & 0 & d+t\,a \\
1 & -a & t \\
0 & 0 & 0
\end{bmatrix}.$$
As $d+t\,a \neq 0$ and $t \neq 0$, we find a contradiction with the above line of reasoning.

Therefore, $d=0$.
Then, the matrices of $z \mapsto Az$ and $z \mapsto Bz$ in the basis $(e_1,e_2,e_3+e_1)$
are, respectively, $I_3+E_{2,3}$ and $\begin{bmatrix}
a & 0 & a \\
1 & -a & e+1 \\
0 & 0 & 0
\end{bmatrix}$. Applying the above proof in that new situation yields $a=0$.
Therefore,
$$B=\begin{bmatrix}
0 & 0 & 0 \\
1 & 0 & e \\
0 & 0 & 0
\end{bmatrix}$$
With $(e_3-e\,e_1,e_1,e_2)$ as our new basis, we are finally left with the case when
$$A=\begin{bmatrix}
1 & 0 & 0 \\
0 & 1 & 0 \\
1 & 0 & 1
\end{bmatrix} \quad \text{and} \quad B=\begin{bmatrix}
0 & 0 & 0 \\
0 & 0 & 0 \\
0 & 1 & 0
\end{bmatrix}.$$
Set
$$C:=\begin{bmatrix}
1 & 0 & 1 \\
1 & 1 & 0 \\
0 & 1 & 0
\end{bmatrix}$$
and note that $C$ is cyclic and
$$C^2=\begin{bmatrix}
1 & 1 & 1 \\
-1 & 1 & 1 \\
1 & 1 & 0
\end{bmatrix}.$$
One sees that $\tr(A)=\tr(AC)=\tr(AC^2)=0$, and hence $A \in \im (\ad_C)$ by Lemma \ref{imadcyclic}.
On the other hand, $\tr(BC)=0$. As $A \not\in [\calH,\calH]$, one should find $\tr(BC^2)=0$,
which is obviously false. Thus, we have a final contradiction in that case.

\noindent \textbf{Subcase 2.2.} $b=0$. \\
Assume furthermore that $a \neq 0$. Then, in the basis $(e_1+e_2,e_2,e_3)$, the respective matrices of $z \mapsto Az$ and $z \mapsto Bz$
are $I_3+E_{2,3}$ and $\begin{bmatrix}
a & 0 & d \\
-2a & -a & e-d \\
0 & 0 & 0
\end{bmatrix}$. This sends us back to Subcase 2.1, which leads to another contradiction.
Therefore, $a=0$.

If $d=0$, then we see that $B \in \Vect(I_n,A)$, and hence principle (2) from Section \ref{proofstrategy} combined
with Theorem \ref{hypercan} shows that $A \in [\calH,\calH]$, contradicting our assumptions.
Thus, $d \neq 0$. Replacing the basis $(e_1,e_2,e_3)$ with $(d\,e_1+e\,e_2,e_2,e_3)$, we
are reduced to the case when
$$A=\begin{bmatrix}
1 & 0 & 0 \\
0 & 1 & 1 \\
0 & 0 & 1
\end{bmatrix} \quad \text{and} \quad
B=\begin{bmatrix}
0 & 0 & 1 \\
0 & 0 & 0 \\
0 & 0 & 0
\end{bmatrix}.$$
In that case, we set
$$C:=\begin{bmatrix}
0 & 0 & 0 \\
1 & 0 & 0 \\
0 & 1 & -1
\end{bmatrix}$$
which is a cyclic matrix with
$$C^2=\begin{bmatrix}
0 & 0 & 0 \\
0 & 0 & 0 \\
1 & -1 & 1
\end{bmatrix},$$
so that $\tr(A)=\tr(AC)=\tr(AC^2)=0$ and $\tr(BC)=0$. As $\tr(BC^2) \neq 0$,
this contradicts again the assumption that $A \not\in [\calH,\calH]$.
This final contradiction shows that the initial assumption $A \not\in [\calH,\calH]$ was wrong.
\end{proof}

\subsection{Conclusion}

Let $A \in \Mat_n(\K)$ and $B \in \Mat_n(\K)\setminus \{0\}$, where $n \geq 3$ and $\# \K \geq 4$.
Set $\calH:=\{B\}^\bot$ and assume that $\tr(A)=0$ and $\tr(B)=0$.
If $A$ is similar to $\lambda I_3+E_{2,3}$, then we know from Lemma \ref{finalcase}
and principle (3) of Section \ref{proofstrategy} that $A \in [\calH,\calH]$.
Otherwise, if $(I_n,A,B)$ is LLD then we know from Lemma \ref{LLDcase} that $A \in [\calH,\calH]$.
Using Lemma \ref{reductionlemma}, we conclude that $A \in [\calH,\calH]$ in every possible situation. This completes
the proof of Theorem \ref{dSPcrochet}.

\end{document}